\newtheorem{thm}{Theorem}[section]
\newtheorem{pro}[thm]{Proposition}
\newtheorem{lem}[thm]{Lemma}
\newtheorem{cor}[thm]{Corollary}
\newtheorem{que}[thm]{Question}
\newtheorem{rem}[thm]{Remark}
\newcommand{\Int}{\mbox{{\rm int}}\,}
\newcommand{\cl}{\mbox{{\rm cl}}}
\newcommand{\st}{\mbox{{\rm St}}}
\newcommand{\id}{\mbox{{\rm id}}}
\newcommand{\Hom}{\mbox{{\rm Hom}}}
\renewcommand{\int}{{\rm int}}
\begin{document}

\title{Coset spaces of metrizable   groups}

 \thanks{This research was partially supported by  grants IN-115717, PAPIIT (UNAM) and RFBR, 17-51-18051 Bulg\_a}

\author{S.~Antonyan}
\address{Departamento de Matem\' aticas, Facultad de Ciencias, Universidad Nacional Autonoma de M\' exico, Ciudad de M\'exico (Mexico)
}
\email{antonyan@unam.mx}

\author{N.~Antonyan}
\address{Departamento de Matem\' aticas, Escuela de Ingenieria y Arquitectura, Instituto Tecnol\' ogico y de Estudios Superiores de Monterrey,
Campus Ciudad de M\' exico (Mexico)}
\email{nantonya@itesm.mx}

\author{K.\,L.~Kozlov}
\address{Faculty of mechanics and mathematics, Moscow State University}
\email{kkozlov@mech.math.msu.su}


\maketitle


\begin{abstract}
We characterize coset spaces of topological groups which are coset spaces of (separable)
metrizable groups and complete metrizable (Polish) groups.
Besides, it  is shown that for a $G$-space $X$ with a $d$-open action there is a
 topological group $H$ of weight and cardinality less than  or equal to  the weight of $X$ such that $H$ admits a  $d$-open action on $X$.  This is further applied to show that if $X$ is a separable metrizable coset space
then $X$ has a Polish extension which is a coset space of a Polish
group.
\end{abstract}

\bigskip

\keywords{$G$-space, coset space, topological group, metric \\

          2010MSC:\  54H15;\ 57S05;\ 22F30;\ 54D35}



\section{Introduction}

In the study of topological homogeneity it is natural to ask about groups (or their classes)
which realize the homogeneity in question.
 This approach may give an additional information about the phase space, its degree of homogeneity and   type of the action.
 The survey of A.\,V.~Arhangel'skii and J.~van
Mill~\cite{ArM} can serve as a good introduction to the subject of
the paper.

\medskip

{\bf General Question.} Let $X$ be a coset space of a topological group and assume that $X$ belongs to a   class $\mathcal P$ of topological spaces. Can $X$  be a coset space of a topological group $G$ from a given
class $\mathcal P'$?  We can ask also about additional
properties of the natural action $G\curvearrowright X=G/H$.

\medskip

If a coset space $X$ is locally compact, then $X$ is a coset space
of the group $G=\Hom (X)$ in the $g$-topology~\cite[Theorems 3]{Ar}
which is a Raikov complete topological group~\cite[Theorem 6]{Ar}
and $w(G)\leq w(X)$. The inequality follows from the following
facts: $\Hom (X)$ is a subgroup of $\Hom (\alpha X)$, where $\alpha
X$ is the Alexandroff compactification of $X$; the $g$-topology of
$\Hom (X)$ is induced by the compact open topology on $\Hom (\alpha
X)$; and for the base of the compact open topology on $\Hom (\alpha
X)$  which weight is equal to $w(\alpha X)=w(X)$ one can take open
sets as the sets from a big base of $\alpha X$  and compact sets as
their closures.

From Effros's theorem~\cite{Eff} and the above fact
G.~Ungar~\cite{Ung} deduced that a homogeneous separable metrizable
locally compact space is a coset space of a Polish group.
F.~Ancel~\cite[Question 3]{An} asked whether every homogeneous
Polish space is a coset space, preferably, of some Polish group.
J.~van Mill~\cite{vM7} gave an example of a homogeneous Polish space
which need not be a coset space at all. However the following
question still remains open.
\medskip

{\bf Question.} Is a separable metrizable (respectively Polish) coset space $X$ a
coset space of some separable metrizable (respectively Polish)
group?

\medskip

This question has a positive answer in the case of strongly locally
homogeneous spaces. Recall that a space $X$ is {\it strongly locally
homogeneous} (abbreviated, SLH) if it has an open base $\bf B$ such
that for every $B\in\bf B$ and any $x, y\in B$ there is a
homeomorphism $f: X\to X$ which is supported on $B$ (that is, $f$ is the
identity outside $B$) and moves $x$ to $y$~\cite{Ford}. Any
homogeneous SLH space is a coset space~\cite{Ford}. J.~van Mill made
this result more precise by showing that a separable metrizable
(respectively Polish) SLH space is a coset space of a separable
metrizable~\cite{vM4} (respectively Polish~\cite{vM7}) group.

In this paper we are specially interested in the question when a
metrizable coset space is a coset space of a metrizable group.

Recall that a space $X$ is {\it metrically homogeneous} if there
exists a metric $\rho$ on $X$ such that the action of the group of
isometries ${\rm Iso} (X, \rho)$ is transitive. From~\cite{Kr} it
follows that if a coset space $G/H$ with respect of a compact
subgroup $H\subset G$ is  metrizable, then it  is metrically
homogeneous. The group ${\rm Iso} (X)$ of a metric compactum $X$ is
a closed and equicontinuous subgroup of $\Hom (X)$ in the compact
open topology. Hence, ${\rm Iso} (X)$ in the compact open topology
(which coincides with the topology of pointwise
convergence~\cite[Theorem 1, Ch. X, \S 2, 4]{Burb3}) is compact due
to  Ascoli's theorem.  Therefore, a metrically homogeneous compactum
is a coset space of a compact metrizable group. We proved the
following N.~Okromeshko's theorem.

\begin{thm}\cite{Okr}\label{t1.1}
A metrizable compactum $X$ is a coset space of a compact metrizable
group iff it is metrically homogeneous.
\end{thm}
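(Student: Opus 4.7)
The plan is to prove the two implications separately; both are essentially outlined in the preceding discussion and our task is to lay them out as a self-contained argument.

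For the direction ``metrically homogeneous $\Rightarrow$ coset space of a compact metrizable group'', I would proceed exactly along the lines sketched in the paragraph preceding the theorem. Fix a metric $\rho$ on $X$ so that $G := \mathrm{Iso}(X,\rho)$ acts transitively on $X$. Since $X$ is a compactum and every $g\in G$ is an isometry, $G$ is a pointwise equicontinuous family in $C(X,X)$. By Ascoli's theorem, the closure of $G$ in $C(X,X)$ (in the compact-open topology) is compact; moreover, the uniform limit of isometries is again an isometry, so $G$ is itself closed and hence compact. Because $X$ is separable metrizable and compact, the compact-open topology on $C(X,X)$ is separable metrizable, and therefore so is $G$. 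On the equicontinuous family $G$ this topology coincides with the pointwise topology, which is the standard topological group topology on $G$. Letting $H$ be the stabilizer of a fixed point $x_0\in X$, transitivity yields a $G$-equivariant homeomorphism $G/H\to X$, $gH\mapsto g(x_0)$.

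For the converse direction, suppose $X=G/H$ with $G$ compact metrizable; then $H$ is also compact. The key tool is that every compact metrizable group admits a bi-invariant metric $d$ compatible with its topology (averaging any compatible metric against Haar measure). Define
\[
\rho(xH,yH) := \inf_{h_1,h_2\in H} d(xh_1,yh_2) = \inf_{h\in H} d(x,yh),
\]
the last equality using right-invariance. Left-invariance of $d$ ensures that $\rho$ is well defined on cosets and that the natural action of $G$ on $X$ is by $\rho$-isometries; compactness of $H$ guarantees that the infimum is attained, so $\rho$ genuinely separates points, and a standard argument shows that $\rho$ induces the quotient topology, which, since $G$ is compact, coincides with the original compact topology on $X$. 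Since $G$ already acts transitively, the full group $\mathrm{Iso}(X,\rho)$ acts transitively, so $X$ is metrically homogeneous. (This is essentially what the authors attribute to Kristensen in \cite{Kr}.)

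The only real obstacle is packaging the two classical facts cleanly: the Ascoli argument (together with the fact that on an equicontinuous family the compact-open and pointwise topologies coincide) in the forward direction, and the existence of a bi-invariant compatible metric on a compact metrizable group in the backward direction. Both are standard, but a careful proof should verify that the quotient pseudometric $\rho$ is actually a metric and induces the correct topology, which relies crucially on the compactness of $H$.
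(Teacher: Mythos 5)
Your proposal is correct and follows essentially the same route as the paper: the forward implication is the identical Ascoli argument (equicontinuity and closedness of $\mathrm{Iso}(X,\rho)$ in $\Hom(X)$ with the compact open topology, which coincides with pointwise convergence), and the converse is the statement the paper delegates to Kristensen's result \cite{Kr}. The only difference is that you spell out that cited direction explicitly via a bi-invariant metric on $G$ and the induced quotient metric on $G/H$, which is exactly the standard proof behind the citation.
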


\medskip

Recall that a metric $\rho$ on a space $X$ is called {\it proper} if
every ball has compact closure. An action of a $G$-space $(G, X,
\alpha)$ is called {\it proper} \cite[Ch., IV]{Bour} if the map $G\times X\to
X\times X$, $(g, x)\to (gx, x)$ is perfect. From~\cite[Theorem
1.1]{AMN} (necessity) and~\cite[Corollary 5.6]{GK} (sufficiency) we
get  the following generalization of Theorem~\ref{t1.1}.

\begin{thm}\label{t1.2}
A locally compact separable metrizable space $X$ is a coset space of
a locally compact separable metrizable group $G$ and the natural
action $G\curvearrowright X=G/H$ is proper iff $X$ is metrically
homogeneous with respect to a proper metric.
\end{thm}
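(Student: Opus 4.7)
The plan is to prove the two implications separately, mirroring the structure indicated by the cited references.

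For the necessity direction, suppose $X=G/H$ with $G$ locally compact, separable metrizable, and the natural action proper. First I would observe that properness of the translation action $G\curvearrowright G/H$ forces $H$ to be compact: the fiber of the map $(g,xH)\mapsto(gxH,xH)$ over $(H,H)$ is $H\times\{H\}$, which must be compact because the map is perfect. Second, I would invoke Struble's theorem, which asserts that every locally compact, second countable Hausdorff group admits a proper left-invariant metric $d_G$. Third, using the compactness of $H$, I would average $d_G$ against normalized Haar measure on $H\times H$ to obtain a proper metric that is left $G$-invariant and bi-$H$-invariant; this descends to a proper metric $\rho$ on $X=G/H$ satisfying $\rho(gxH,gyH)=\rho(xH,yH)$. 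Finally, since $G$ acts transitively on $X$ by $\rho$-isometries, $\mathrm{Iso}(X,\rho)$ acts transitively, so $X$ is metrically homogeneous under a proper metric.

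For the sufficiency direction, assume $(X,\rho)$ is metrically homogeneous with $\rho$ proper, and set $G=\mathrm{Iso}(X,\rho)$ topologized by pointwise convergence (which agrees with the compact-open topology, since isometries are equicontinuous; compare the argument cited from \cite{Burb3} used for Theorem~\ref{t1.1}). The key classical fact I would appeal to is that the isometry group of a proper metric space is a locally compact Polish group whose evaluation action $G\times X\to X$ is proper; this is the content behind the citation \cite[Corollary 5.6]{GK}. Local compactness follows from Arzela--Ascoli applied to balls in $X$ (which are relatively compact by properness of $\rho$), second countability from second countability of $X$, and properness of the action from the fact that $\{g\in G : gK\cap L\ne\emptyset\}$ is relatively compact whenever $K,L\subset X$ are compact, again by Ascoli. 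Then the transitivity hypothesis yields $X\cong G/H$ as $G$-spaces, where $H$ is the stabilizer at a chosen basepoint.

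The step that carries the real weight is the sufficiency direction: verifying that $\mathrm{Iso}(X,\rho)$ is locally compact and acts \emph{properly}, not merely continuously and transitively. Continuity is straightforward, but properness requires the relative compactness argument above, which genuinely uses that $\rho$ is proper (not just homogeneous). On the necessity side, the main tool that is not a routine verification is Struble's theorem; once a proper left-invariant metric on $G$ is in hand, the averaging and descent to $G/H$ are standard. Together, these produce the equivalence, completing the generalization of Theorem~\ref{t1.1} from the compact setting to the proper locally compact setting.
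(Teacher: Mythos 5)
The paper gives no self-contained proof of Theorem~\ref{t1.2}: it obtains necessity by citing \cite[Theorem 1.1]{AMN} (a proper action of a locally compact $\sigma$-compact group on a locally compact $\sigma$-compact metrizable space admits a compatible proper invariant metric) and sufficiency by citing \cite[Corollary 5.6]{GK}. Your sufficiency argument is precisely the content of the latter citation --- local compactness of ${\rm Iso}(X,\rho)$ and properness of the evaluation action via Ascoli applied to the relatively compact balls of the proper metric --- so that half coincides with the paper's route; just note that passing from ``continuous transitive proper action'' to ``$X\cong G/H$'' also uses the open mapping theorem for transitive actions of locally compact second countable groups. On the necessity side you take a genuinely different and more elementary path: you extract compactness of $H$ from perfectness of $(g,xH)\mapsto(gxH,xH)$, invoke Struble's theorem on $G$ (which applies, since $G$ is locally compact and second countable), and push the metric down to $G/H$, rather than appealing to the much more general metrization theorem of \cite{AMN}. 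For the transitive case this is a reasonable trade: a lighter tool at the cost of losing generality.

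There is, however, one step that fails as written: averaging $d_G$ over $H\times H$. The function $\tilde d(x,y)=\int_{H\times H} d_G(xh_1,yh_2)\,dh_1\,dh_2$ is not a metric (for nontrivial $H$ it is strictly positive at $x=y$), and no left-$G$-, right-$H$-invariant metric on $G$ literally ``descends'' to $G/H$, because descent would require invariance under \emph{independent} right $H$-translations of the two arguments. The standard repair is to average over a single copy of $H$, setting $d'(x,y)=\int_H d_G(xh,yh)\,dh$, which is again a compatible proper left-invariant metric (note $d_G(yh,y)=d_G(h,e)$ is bounded on the compact set $H$, so $d'$ differs from $d_G$ by a bounded amount and properness survives) and is now right-$H$-invariant; then define the quotient metric $\rho(xH,yH)=\inf_{h\in H}d'(x,yh)$. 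Right-$H$-invariance yields the triangle inequality for $\rho$, and compactness of $H$ yields that $\rho$ is a compatible, proper, $G$-invariant metric on $G/H$. With this correction your necessity argument is complete.
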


\begin{rem}
In~\cite[Proposition 2.7]{AD} it is shown, in fact, that the
existence of a proper $G$-invariant metric on a coset space $X$ of a
locally compact separable metrizable group $G$ is equivalent to the
fact that $X$ is metrically homogeneous and the stabilizers of the
natural action ${\rm Iso} (X)\curvearrowright X$  are compact.
\end{rem}

A homeomorphism $g\in\Hom (X)$ of a metric space $(X, \rho)$ is a
{\it Lipschitz homeomorphism} if there is $\lambda\geq 1$ such that
$$\lambda^{-1}\rho(x, y)\leq\rho(g(x), g(y))\leq\lambda\rho(x, y)\ \mbox{for all}\ x, y\in X.$$
Let $L(X, \rho)$ denote the group of all Lipschitz homeomorphisms.

A metrizable space $X$ is called {\it Lipschitz homogeneous} if
there exists a metric $\rho$ on $X$ such that the action of $L(X,
\rho)$ in the discrete topology is transitive. A.~Hohti and
H.~Junnila~\cite{HJ} showed that every homogeneous locally compact
separable metrizable space $X$ is Lipschitz homogeneous.
A.~Hohti~\cite[Theorem 3.1]{Hoh} strengthened this result in a
compact case showing that a compact metrizable $X$ is a coset space
of $L(X, \rho)$ in the compact open topology for some metric $\rho$
on $X$.

These results  show that the approach of characterizing groups
which realize homogeneity of metrizable spaces using metrics and
properties of homeomorphisms connected with them is not new.

In Theorem~\ref{tA.4} we characterize coset spaces which are coset
spaces of (separable) metrizable groups. We use (totally bounded)
metrics on coset spaces and the topology of uniform convergence on
the groups of {\it uniform equivalences} with respect to these
metrics.

In Theorem~\ref{tA.5} and Corollary~\ref{cA.4} we characterize
complete metrizable coset spaces which are coset space of complete
metrizable groups (in particular of the same weight). The approach
is based on the possibility of extension of the action $G\times X\to
X$ to an action $\rho G\times \beta_G X\to \beta_GX$, where $\rho G$
is the Raikov completion of the acting group $G$ and $\beta_GX$ is
the extension of $X$ with respect to the maximal totally bounded
equiuniformity. Emphasizing invariant subset of points where the
action is $d$-open, we use a theorem~\cite[Theorem 3]{K4} that a
$d$-open action of a \v Cech complete group (hence of a complete
metrizable group) is open. Remark~\ref{rA.2} shows how this approach
works in the case of Polish SLH spaces.

In Theorem~\ref{t3.1} (and Corollary~\ref{c3.1}) we show that for
any $d$-open action $G\curvearrowright X$ there is a $d$-open action
$H\curvearrowright X$ of a group $H$  of cardinality and weight less
than  or equal  to  the weight of $X$. This may provide some
technique for possible approach to the questions formulated in the
paper. For example, Theorem~\ref{thmadd} shows that if $X$ is a
coset space of a complete metrizable group then it is a coset space
of a complete metrizable group $G$ with  $w(G)=w(X)$. This technique
also allows to construct equivariant compactifications preserving
the weight and the $d$-openness of actions (Theorem~\ref{t3.2}).
Besides, we  show that if $X$ is a separable metrizable coset space,
then $X$ has a Polish extension which is a coset space of a Polish
group. Moreover, to this Polish extension an arbitrary countable
family of homeomorphisms of $X$ can be extended
(Corollary~\ref{c3.4}).


\section{Preliminaries}\label{Prelim}

All spaces, unless otherwise stipulated, are assumed to be
Tychonoff, maps are continuous, notations and  terminology  are from~\cite{E}. A topological space (a space $X$
with topology $\tau$) is denoted by $(X, \tau)$ if we want to
emphasize which topology is considered. Uniformities are introduced
by  families  of covers. For information about uniform spaces
see~\cite{Is}, \cite{RD}.

Nbd(s) is an abbreviation of open neighborhood(s), unless otherwise
stipulated, $\cl_\tau A$ or simply $\cl A$, and $\Int_\tau A$ or
simply $\Int A$ are the closure and the interior of a subset $A$ of
a space $(X, \tau)$ or $X$, respectively. For a family $u$ of subsets
of $X$, and $Y\subset X$ we denote $\st(Y, u)=\bigcup\{U\in u\mid  Y\cap
U\ne\emptyset\}$. For covers $u$ and $v$ of $X$ we denote $u\succ
v$, $u\ast\succ v$ if $u$  refines, respectively star-refines
$v$. A metrizable space is called Polish if it is separable and has
a complete metric. A map $\id$ is an identity map.

For information about topological groups see~\cite{AT}
and~\cite{RD}. By $N_G(e)$ we denote the family of nbds of the unity
$e\in G$. $\rho G$ is the {\it Raikov completion} (or completion in
the two-sided uniformity) of a topological group. $\Hom (X)$ is the
group of homeomorphisms of $X$.

For general information about actions see~\cite{Vr}, \cite{AS},
\cite{Mg2007} and~\cite{K2013}. By $(G, X, \alpha)$ we denote an
action $\alpha$ of a group $G$ on a topological space $X$ such that
the map $\alpha^g: X\to X$, $\alpha^g(x)=\alpha(g, x)$, is
continuous (is a homeomorphism) for every  $g\in G$. Here the action
doesn't depend on the topology of the acting group. If $G$ is a
topological group and the action $\alpha: G\times X\to X$ is
continuous then $(G, X, \alpha)$ is called a {\it $G$-space} or a
{\it topological transformation group}. By a {\it Polish $G$-space}
(respectively, {\it separable metrizable $G$-space}) we mean a
$G$-space $(G, X, \alpha)$ where $X$ and $G$ are Polish
(respectively, separable metrizable) spaces.

If $X=G/H$ is a coset space of a topological group  $G$, then $(G,
X, \alpha)$ is a $G$-space with the  natural action $\alpha(g,
xH)=(gx)H$, where $g, x\in G$.

A pair of maps $(\varphi: G\to H,\ f: X\to Y)$ of $(G, X, \alpha_G)$
to $(H, Y, \alpha_H)$ such that $\varphi: G\to H$ is a homomorphism
and the diagram
$$\begin{array}{ccc}
G\times X &\stackrel{\varphi\times f}\longrightarrow & H\times Y \\
\downarrow\lefteqn{\alpha_G} && \downarrow\lefteqn{\alpha_H} \\
X &\stackrel{f}\longrightarrow & Y
\end{array}$$
is commutative is called {\it equivariant} (thus the equivariantness
is defined not only for maps of $G$-spaces with the same acting group; it generalizes the
traditional concept of equivariantness). We shall use the notation $(\varphi, f):
(G, X, \alpha_G)\to (H, Y, \alpha_H)$ in this case.

The commutativity of the
diagram may be written as the fulfillment of the following condition
$$f(gx)=\varphi(g)f(x)\ \mbox{for any}\ x\in X,\ g\in G.$$
The composition of eqiuvariant pairs of maps is an equivariant pair.
If $f$ is an embedding then the pair $(\varphi, f)$ is an {\it
equivariant embedding} of $(G, X, \alpha_G)$ into $(H, Y,
\alpha_H)$.

A $G$-space $(G, X, \alpha)$ is called {\it $G$-Tychonoff}, if there
is an equivariant embedding $(\id, f)$ of $(G, X, \alpha)$ into a
$G$-space $(G, bX, \tilde\alpha)$ where $bX$ is a compactification
of $X$. The {\it maximal $G$-compactification} is denoted $(G,
\beta_GX, \alpha_\beta)$. The set $\{g\in G \mid  gx=x, \forall x\in
X\}$ is the {\it kernel of the action}. If the kernel of an action
is $\{e\}$, then the action is called {\it effective}.

Let $(G, X, \alpha)$ be a $G$-space and $A(X)$ be the set of all
functions  $X\to \mathbb R$. We can define an action of $G$ on
$A(X)$ by $(gf)(x)=f(g^{-1}x)$.

A continuous function $f: X\to\mathbb R$ on a $G$-space $(G, X,
\alpha)$ is called $G$-uniform if for any $\varepsilon>0$ there is
$O\in N_G(e)$ such that $|f(x)-f(gx)|<\varepsilon$ for any $x\in X$,
$g\in O$. $G$-uniform maps were  introduced by J.~de
Vries~\cite{dvr:76} under the name of $\alpha$-uniform maps, where
$\alpha: G\times X\to X$ is an action.

The set $C^*_G(X)$ of all bounded $G$-uniform functions on
$X$ is an invariant subset of $A(X)$. Moreover, $(G, X, \alpha)$ is
$G$-Tychonoff iff $C^*_G(X)$ separates points and closed subsets of
$X$~\cite{AS}.

For an action $\alpha: G\times X\to X$ on a uniform space  $(X,
{\mathcal U})$ the following conditions are equivalent:
\begin{itemize}
\item[(1)] for any $u\in {\mathcal U}$ there exists $O\in N_G(e)$ such that the cover $\{Ox\mid x\in X\}=\{\alpha (O, x)\mid  x\in X\}$  refines  $u$,
\item[(2)] for any $u\in {\mathcal U}$ there are $O\in N_G(e)$ and $v\in {\mathcal U}$ such that $Ov=\{\alpha (O, V)\mid  V\in v\}\succ u$.
\end{itemize}
An action satisfying either condition (1) or (2) is  called {\it
bounded}~\cite[Ch.\,III, \S 7.3]{Vr}. The action is bounded with
respect to some uniformity on $X$ iff $(G, X, \alpha)$ is
$G$-Tychonoff. Uniformity ${\mathcal U}$ on a space $X$ of a
$G$-space $(G, X, \alpha)$ is called an {\it
equiuniformity}~\cite{Mg1} if it is {\it saturated} (i.e. that all
homeomorphisms from $G$ are uniformly continuous) and the action is
bounded with respect to ${\mathcal U}$.  An action $\alpha: G\times
X\to X$ on a uniform space  $(X, {\mathcal U})$ equipped with an
equiuniformity $\mathcal U$, has a continuous extension
$\tilde\alpha_\rho: \rho G\times \tilde X\to\tilde X$, where  $\rho
G$ is the Raikov completion of $G$ and $ \tilde X $ is  the
completion  of $X$ with respect to ${\mathcal U}$~\cite{Mg1}.
Evidently, the restriction $\tilde\alpha: G\times \tilde X\to\tilde
X$ of $\tilde\alpha_\rho$ is also continuous. Among equiuniformities
there is the maximal one.

An action  $\alpha: G\times X\to X$ is called
\begin{itemize}
\item[] {\it open} if $x\in\int (Ox)$ for any point $x\in X$ and  $O\in N_G(e)$;
\item[] {\it $d$-open} if $x\in\int (\cl (Ox))$ for any point $x\in X$ and  $O\in
N_G(e)$.
\end{itemize}
If $(G, X, \alpha)$ is a $G$-space with a $d$-open action, then  $X$
is a direct sum of clopen subsets ({\it components of the action}).
Each component of the action is the closure of the orbit of an
arbitrary point of this component~\cite[Remark 2]{CK3}. If $(G, X,
\alpha)$ is a $G$-space with an open action and one component of
action, then $X$ is a coset space of $G$. Further information about
($d$-)open actions,  their properties and natural uniform structures
that are induced by ($d$-)open actions can be found in~\cite{CK3},
\cite{K2013}, \cite{K4} and \cite{K2016}.


\section{Compactification theorem}

The following result is a part of~\cite[Theorem 2.13]{Mg3}. We shall
present the proof for  completeness of the  exposition. By $ib(G)$
will be denoted  the index of narrowness of a topological group $G$,
see~\cite[Chapter 5, \S 5.2]{AT}.

\begin{lem}\label{lA.2}
Let $(G, X, \alpha)$ be a $G$-space and $T$ be an invariant subset
of $C^*_G(X)$ that separates points and closed subsets of $X$. Then
the initial uniformity ${\mathcal U}_T$ with respect to $T$ is a totally bounded equiuniformity on $X$.
\end{lem}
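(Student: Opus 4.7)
The goal is to check that $\mathcal{U}_T$ is (a) a Hausdorff uniformity compatible with the topology of $X$, (b) totally bounded, (c) saturated, and (d) bounded with respect to the action $\alpha$. Conditions (a) and (b) are essentially formal: the initial uniformity with respect to any family of real-valued functions is a uniformity, and the hypothesis that $T$ separates points and closed subsets makes it Hausdorff and compatible with the topology of $X$. Moreover, each $f\in T$ is bounded, so $f(X)$ lies in a compact interval of $\mathbb{R}$, hence pulls back a totally bounded uniformity; the initial uniformity generated by totally bounded uniformities is totally bounded.

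For (c), fix $g\in G$ and a basic entourage
$$U_{f_1,\ldots,f_n,\varepsilon}=\{(x,y)\in X\times X : |f_i(x)-f_i(y)|<\varepsilon,\ i=1,\ldots,n\}.$$
With the $G$-action on $A(X)$ defined in the preliminaries, $(g^{-1}f_i)(x)=f_i(\alpha^g(x))$, so
$$|f_i(\alpha^g(x))-f_i(\alpha^g(y))|=|(g^{-1}f_i)(x)-(g^{-1}f_i)(y)|.$$
By invariance of $T$ each $g^{-1}f_i$ lies in $T$, so $\alpha^g\times\alpha^g$ sends $U_{g^{-1}f_1,\ldots,g^{-1}f_n,\varepsilon}$ into $U_{f_1,\ldots,f_n,\varepsilon}$, which gives uniform continuity of $\alpha^g$ on $(X,\mathcal{U}_T)$.

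For (d), I use the fact that $T\subset C^*_G(X)$, so each $f_i$ is $G$-uniform. Given $f_1,\ldots,f_n\in T$ and $\varepsilon>0$, choose $O_i\in N_G(e)$ with $|f_i(x)-f_i(gx)|<\varepsilon/2$ for all $x\in X$ and $g\in O_i$, and set $O=\bigcap_{i=1}^n O_i$. Then for any $x\in X$ and any $g,h\in O$,
$$|f_i(gx)-f_i(hx)|\leq |f_i(gx)-f_i(x)|+|f_i(x)-f_i(hx)|<\varepsilon,$$
so each orbit piece $Ox$ has $f_i$-diameter less than $\varepsilon$ for every $i$. Consequently the cover $\{Ox : x\in X\}$ refines the basic cover of $X$ associated with $U_{f_1,\ldots,f_n,\varepsilon}$, verifying condition (1) of boundedness from the preliminaries. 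I do not expect any genuine obstacle here; the lemma is a straightforward pairing of invariance of $T$ with saturation, and of $G$-uniformity of the elements of $T$ with boundedness of the action.
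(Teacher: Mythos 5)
Your proof is correct and follows essentially the same route as the paper's: compatibility and total boundedness are formal consequences of $T$ separating points and closed sets and mapping into compacta, saturation comes from invariance of $T$, and boundedness from the $G$-uniformity of its elements. The paper states these four points in three sentences without computation; you have merely supplied the routine verifications it omits.
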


\begin{proof}
The uniformity ${\mathcal U}_T$ is compatible with the topology of
$X$ since the maps from $T$ are continuous and separate points and
closed subsets of $X$. Further,  ${\mathcal U}_T$  is totally
bounded since it is the initial uniformity with respect to the maps
to compacta and saturated since $T$ is an invariant subset of
$C^*_G(X)$. Finally, since $T$ consists of $G$-uniform functions,
the action is bounded by ${\mathcal U}_T$.

\end{proof}

Let $(G, X, \alpha)$ be a $G$-space. The set $C^*_G(X)$ of all
bounded $G$-uniform functions $X\to \mathbb R$ endowed with
point-wise defined algebraic operations and with the sup-norm $\Vert
f\Vert=\sup_{x\in X} f(x)$ is a Banach space. It is well-known that
the action $\alpha^*:G\times C^*_G(X)\to C^*_G(X)$,
$$\alpha^*(g,
f)(x)=f(\alpha(g^{-1}, x)), \ x\in X,$$ is linear, isometric and
continuous (see, e.g., ~\cite{AS}).

\begin{lem}\label{lA.3}
Let $(G, X, \alpha)$ be a $G$-space, $f\in C^*_G(X)$ and $T_f=Gf$.
Then the initial pseudouniformity ${\mathcal U}_{T_f}$ with respect
to the maps from $T_f$ has weight $\leq ib(G)$.
\end{lem}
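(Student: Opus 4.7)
The plan is to exploit the $G$-uniformity of $f$ together with the definition of $ib(G)$ to approximate every element of $T_f=Gf$ in sup-norm by elements of a subset of $T_f$ of cardinality $\leq ib(G)$, and then show the initial pseudouniformity is generated by this smaller collection.

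First I would write down an explicit subbase for ${\mathcal U}_{T_f}$, namely the entourages $V(g,\varepsilon)=\{(x,y)\in X\times X\mid |(gf)(x)-(gf)(y)|<\varepsilon\}$ for $g\in G$ and $\varepsilon>0$. Then for each $n\in\mathbb N$, using that $f\in C^*_G(X)$, choose a symmetric $O_n\in N_G(e)$ such that $|f(x)-f(hx)|<1/n$ for every $x\in X$ and every $h\in O_n$. By definition of the index of narrowness, pick $F_n\subset G$ with $|F_n|\leq ib(G)$ and $F_nO_n=G$. Set $D=\bigcup_{n\in\mathbb N}F_n$, so $|D|\leq ib(G)$ (assuming $ib(G)$ is infinite, the countable union does not increase cardinality).

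The key computation is the following uniform approximation. Given $g\in G$ and $n\in\mathbb N$, write $g=ah$ with $a\in F_n$ and $h\in O_n$. Then for every $x\in X$, setting $y=a^{-1}x$,
\[
|(gf)(x)-(af)(x)|=|f(h^{-1}a^{-1}x)-f(a^{-1}x)|=|f(h^{-1}y)-f(y)|<1/n,
\]
because $O_n$ is symmetric, so $h^{-1}\in O_n$. Hence $\|gf-af\|_\infty\leq 1/n$.

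From this it follows immediately that every subbasic entourage $V(g,\varepsilon)$ already contains one of the entourages $V(a,1/n)$ with $a\in D$: choose $n$ with $3/n<\varepsilon$ and $a\in F_n$ with $g\in aO_n$; the triangle inequality then gives $V(a,1/n)\subseteq V(g,\varepsilon)$. Thus $\{V(a,1/n)\mid a\in D,\ n\in\mathbb N\}$ is a subbase (in fact essentially a base) for ${\mathcal U}_{T_f}$, and its cardinality is at most $ib(G)\cdot\aleph_0=ib(G)$, proving $w({\mathcal U}_{T_f})\leq ib(G)$. The main (modest) obstacle is simply bookkeeping with the conventions: the action $(gf)(x)=f(g^{-1}x)$ forces the use of $h^{-1}$, which is why the symmetry of $O_n$ is essential.
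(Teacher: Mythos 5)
Your proposal is correct and follows essentially the same route as the paper: both decompose each $g\in G$ as $g=ah$ with $a$ in a set $F_n$ of cardinality $\leq ib(G)$ satisfying $F_nO_n=G$ and $h\in O_n$ chosen via the $G$-uniformity of $f$, obtain $\Vert gf-af\Vert\leq 1/n$, and conclude by the triangle inequality that the maps $\{af\mid a\in\bigcup_nF_n\}$ already generate ${\mathcal U}_{T_f}$. The only difference is cosmetic: you phrase the initial pseudouniformity via entourages while the paper uses preimages of interval covers of $(hf)(X)$.
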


\begin{proof} For $f\in C^*_G(X)$ let $O_n\in N_G(e)$ be such that
$|f(x)-f(gx)|<\frac{1}{n}$ for $x\in X$, $g\in O_n$, $n\in\mathbb
N$. For every $n\in\mathbb N$ there exists $G_n\subset G$,
$|G_n|\leq ib(G)$ such that $G_nO_n=G$. Put
$G_\infty=\bigcup\{G_n\mid n\in\mathbb N\}$. Then the
cardinality of $G_\infty f$ is $ib(G)$.

To finish the proof it is enough to show that the initial
pseudouniformity with respect to the maps from $G_\infty f$
coincides with ${\mathcal U}_{T_f}$.

The subbase of $u\in {\mathcal U}_{T_f}$ is of the form
$(hf)^{-1}v$, where $v$ is a uniform cover of $(hf)(X)\subset\mathbb
R$ by intervals of diameter $\varepsilon$, $\varepsilon>0$, $h\in
G$. Let $O_n\in N_G(e)$, $O_n^{-1}=O_n$, be such that
$|f(x)-f(gx)|<\frac{1}{n}<\frac{\varepsilon}3$ for $x\in X$, $g\in
O_n$. Take $g_h\in G_\infty$ such that $h\in g_hO_n$ and a uniform
cover $w$ of $(g_hf)(X)\subset\mathbb R$ by intervals of diameter
$\frac{\varepsilon}3$. Then $(g_hf)^{-1}w\succ (hf)^{-1}v$. In fact,
for $x, y\in W\in (g_hf)^{-1}w$ we have
$|(g_hf)(x)-(g_hf)(y)|<\frac{\varepsilon}3$. Then
$$|(hf)(x)-(hf)(y)|\leq
|(hf)(x)-(g_hf)(x)|+|(g_hf)(x)-(g_hf)(y)|+|(g_hf)(y)-(hf)(y)|=$$
$$=|f(h^{-1}x)-f(g_h^{-1}x)|+|(g_hf)(x)-(g_hf)(y)|+|f(g_h^{-1}y)-f(h^{-1}y)|=$$
$$=|f(t_h(g_h^{-1}x))-f(g_h^{-1}x)|+|(g_hf)(x)-(g_hf)(y)|+|f(g_h^{-1}y)-f(t_h(g_h^{-1}x))|\leq$$
$$\frac{\varepsilon}3+\frac{\varepsilon}3+\frac{\varepsilon}3=\varepsilon,$$
where $t_h\in O_n$.

Thus, the pseudouniformity with respect to the maps from $G_\infty
f$ is a base of ${\mathcal U}_{T_f}$.
\end{proof}

\begin{thm}\cite{Mg3}\label{tA.3}
For a $G$-Tychonoff space $(G, X, \alpha)$ there is an equivariant
embedding $(\id, f):(G, X, \alpha)\to (G, B, \tilde\alpha)$ where
$B$ is compact and $w(B)\leq ib(G)\cdot w(X)$.
\end{thm}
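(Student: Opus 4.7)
The plan is to build $B$ as the completion of $X$ with respect to a suitably chosen totally bounded equiuniformity, whose weight we control by combining Lemmas \ref{lA.2} and \ref{lA.3}. The starting point is that, since $(G,X,\alpha)$ is $G$-Tychonoff, the algebra $C^*_G(X)$ separates points and closed subsets of $X$. Using a base of $X$ of cardinality $w(X)$, a standard selection argument allows me to extract a subfamily $T_0\subset C^*_G(X)$ with $|T_0|\le w(X)$ that still separates points from closed subsets. Replacing $T_0$ by the invariant family
$$T=\bigcup_{f\in T_0} Gf \subset C^*_G(X),$$
I preserve the separation property (since $T_0\subset T$) and gain $G$-invariance.

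Next I would apply Lemma \ref{lA.2} to the invariant separating family $T$: this immediately yields that the initial uniformity $\mathcal{U}_T$ is a totally bounded equiuniformity on $X$ compatible with the topology. To control the weight of $\mathcal{U}_T$, observe that $\mathcal{U}_T$ is generated by the union of the pseudouniformities $\mathcal{U}_{T_f}$ for $f\in T_0$. By Lemma \ref{lA.3}, each $\mathcal{U}_{T_f}$ admits a base of cardinality at most $ib(G)$; hence $\mathcal{U}_T$ has a base of cardinality at most $|T_0|\cdot ib(G)\le w(X)\cdot ib(G)$.

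Now I would form the uniform completion $B$ of $(X,\mathcal{U}_T)$. Because $\mathcal{U}_T$ is totally bounded, $B$ is compact, and because a uniform completion has the same weight as the uniformity, $w(B)\le w(X)\cdot ib(G)$. Since $\mathcal{U}_T$ is an equiuniformity, the extension theorem recalled in Section~\ref{Prelim} (\cite{Mg1}) produces a continuous extension $\tilde\alpha_\rho:\rho G\times B\to B$ of the action; its restriction $\tilde\alpha:G\times B\to B$ is then the desired continuous $G$-action on $B$. The canonical inclusion $f:X\hookrightarrow B$ together with $\id:G\to G$ is an equivariant embedding by construction.

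The only genuine step is the weight estimate, and the main technical obstacle is bookkeeping the cardinality through the passage from $T_0$ to its $G$-saturation $T$: without Lemma \ref{lA.3} the family $Gf$ could a priori have cardinality $|G|$, which would ruin the bound. Lemma \ref{lA.3} is precisely what shows that although $Gf$ may be large, the pseudouniformity it induces is still small, of weight $\le ib(G)$, so the final weight $w(X)\cdot ib(G)$ is attained. Everything else (separation by $T_0$, total boundedness and saturation via Lemma \ref{lA.2}, extension of the action to the completion) is a direct invocation of results already available in the paper.
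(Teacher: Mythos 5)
Your proposal is correct and follows essentially the same route as the paper: extract a separating family $T_0$ of size $\le w(X)$, saturate it to $T=GT_0$, apply Lemma~\ref{lA.2} for total boundedness and equiuniformity, use Lemma~\ref{lA.3} to bound the weight of each $\mathcal{U}_{T_f}$ by $ib(G)$, and complete to a compactum with the action extended via \cite{Mg1}. Your remark that Lemma~\ref{lA.3} is what saves the cardinality bookkeeping after saturation is exactly the point the paper's proof relies on.
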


\begin{proof} There exists a subset $T'\subset C^*_G(X)$ that separates points and
closed subsets of $X$ such that $|T'|\leq w(X)$. Therefore, the
invariant subset $T=GT'=\{gf\mid g\in G, f\in T'\}\subset C^*_G(X)$
separates points and closed subsets of $X$.

By Lemma~\ref{lA.2} ${\mathcal U}_T$ is a totally bounded
equiuniformity on $X$. Hence, the completion of $X$ with respect to
${\mathcal U}_T$ is a compactum $B$ and $(G, B, \tilde\alpha)$ is a
$G$-compactification of $(G, X, \alpha)$ (see Section \ref{Prelim}).

Since $w({\mathcal U}_T)\leq |T'|\cdot\sup\{w({\mathcal
U}_{T_f})\mid  f\in T'\}$, by Lemma~\ref{lA.3} $w({\mathcal
U}_T)\leq |T'|\cdot ib(G)\leq ib(G)\cdot w(X)$. Taking into account
that the weight of a compactum coincides with the weight of its
unique uniformity, we get $w(B)\leq ib(G)\cdot w(X)$.
\end{proof}

A group $G$ is $\omega$-narrow iff $ib(G)\leq\aleph_0$,
see~\cite[Chapter 5, \S 5.2]{AT}. The following corollary is a
compactification theorem for actions of $\omega$-narrow groups.

\begin{cor}\cite{Ant}\label{c2.9}
For a $G$-Tychonoff space $(G, X, \alpha)$ where $G$ is
$\omega$-narrow there is an equivariant embedding $(\id, f):(G, X,
\alpha)\to (G, B, \tilde\alpha)$ where $B$ is compact and $w(B)\leq
w(X)$.
\end{cor}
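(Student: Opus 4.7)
The plan is to apply Theorem~\ref{tA.3} directly and use the hypothesis that $G$ is $\omega$-narrow to simplify the weight estimate. By the definition recalled in the paragraph preceding the corollary, $G$ being $\omega$-narrow means precisely $ib(G)\leq\aleph_0$. So the corollary should fall out with essentially no new work.

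More concretely, I would first invoke Theorem~\ref{tA.3} to produce an equivariant embedding $(\id, f)\colon (G,X,\alpha)\to (G, B, \tilde\alpha)$ into a $G$-compactification $B$ satisfying
\[
w(B)\leq ib(G)\cdot w(X).
\]
Substituting the bound $ib(G)\leq\aleph_0$ yields $w(B)\leq \aleph_0\cdot w(X)$. For any $G$-Tychonoff space with $w(X)\geq\aleph_0$ this simplifies to $w(B)\leq w(X)$, which is the desired conclusion.

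The only step requiring a moment's thought is the corner case when $w(X)$ is finite. In that situation $X$ is a finite discrete Tychonoff space, hence already compact, so one may just take $B=X$ with $f=\id$ and the trivial extension $\tilde\alpha=\alpha$; this gives an equivariant embedding with $w(B)=w(X)$. I would mention this only in passing, or simply note that $w(X)$ is understood to be infinite in the standard convention.

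I do not anticipate any genuine obstacle: the entire content of the corollary is repackaging Theorem~\ref{tA.3} together with the definition of $\omega$-narrowness. The substantive work (the construction of the embedding via $T=GT'$, the estimate $w(\mathcal U_{T_f})\leq ib(G)$ from Lemma~\ref{lA.3}, and the totally bounded equiuniformity argument from Lemma~\ref{lA.2}) has already been done in proving Theorem~\ref{tA.3}, so this proof is a one-line deduction.
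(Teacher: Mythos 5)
Your proposal is correct and matches the paper's intent exactly: the corollary is presented as an immediate consequence of Theorem~\ref{tA.3} together with the characterization $ib(G)\leq\aleph_0$ of $\omega$-narrowness stated just before it, which is precisely your one-line deduction. The remark about the finite-weight corner case is a harmless extra.
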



\section{Characterizations of coset spaces of metrizable groups}

\begin{thm}\cite[Theorem 2.2]{Ford}\label{tA.1}  If $X$ possesses
a uniform structure $\mathcal U$ under which every element of some
homeomorphism group $G$ is uniformly continuous, then $G$ is a
topological group relative to the uniform convergence topology induced
by the uniformity.
\end{thm}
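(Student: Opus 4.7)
\medskip

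\noindent\textbf{Proof proposal.} The plan is to fix the natural base of neighbourhoods of a point $g_0\in G$ in the uniform convergence topology, namely sets of the form
$$V(g_0,u)=\{h\in G\mid \text{for every } x\in X\text{ there is } U\in u\text{ with }\{g_0(x),h(x)\}\subset U\},$$
where $u$ runs over $\mathcal U$, and then to verify the two axioms of a topological group: continuity of multiplication $(g,h)\mapsto gh$ and continuity of inversion $g\mapsto g^{-1}$. The key observation, used repeatedly, is that since $G$ is a \emph{group} of homeomorphisms every $g_0\in G$ \emph{and} its inverse $g_0^{-1}$ lie in $G$, so both are $\mathcal U$-uniformly continuous.

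First I would establish continuity of multiplication at $(g_0,h_0)$. Given $u\in\mathcal U$, pick $u^*\in\mathcal U$ with $u^*\ast\succ u$. Using uniform continuity of $g_0$ choose $u_2\in\mathcal U$ such that $\{g_0(a),g_0(b)\}$ is contained in a member of $u^*$ whenever $\{a,b\}$ is contained in a member of $u_2$. I claim that $g\in V(g_0,u^*)$ and $h\in V(h_0,u_2)$ imply $gh\in V(g_0h_0,u)$: for arbitrary $x\in X$, from $h\in V(h_0,u_2)$ we get $\{h_0(x),h(x)\}$ in some $U_1\in u_2$, hence $\{g_0(h_0(x)),g_0(h(x))\}$ lies in some $W_1\in u^*$; from $g\in V(g_0,u^*)$ with the point $y=h(x)$, $\{g_0(h(x)),g(h(x))\}$ lies in some $W_2\in u^*$. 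Since $W_1\cap W_2\ni g_0(h(x))\ne\emptyset$, the star refinement condition places $\{g_0(h_0(x)),g(h(x))\}$ in a single element of $u$, as required.

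Next I would treat inversion at $g_0$. Given $u\in \mathcal U$, apply uniform continuity of $g_0^{-1}\in G$ to choose $u'\in\mathcal U$ with $\{g_0^{-1}(a),g_0^{-1}(b)\}$ in a member of $u$ whenever $\{a,b\}$ is in a member of $u'$. I claim $g\in V(g_0,u')$ forces $g^{-1}\in V(g_0^{-1},u)$. Indeed, for any $x\in X$ set $z=g^{-1}(x)$; then by hypothesis $\{g(z),g_0(z)\}=\{x,g_0(z)\}$ lies in some element of $u'$, so applying $g_0^{-1}$ we obtain $\{g_0^{-1}(x),z\}=\{g_0^{-1}(x),g^{-1}(x)\}$ inside a member of $u$, which is exactly what $g^{-1}\in V(g_0^{-1},u)$ demands.

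The only real obstacle is the mildly asymmetric role played by the factors in the product argument: one must notice that uniform continuity is needed only on one side (here on $g_0$) because the neighbourhood $V(g_0,u^*)$ already controls the difference $\{g_0(y),g(y)\}$ uniformly in $y$. Once this is seen, the remainder is a routine star-refinement bookkeeping, and the inversion argument is even shorter because it uses only one application of uniform continuity of $g_0^{-1}$, which belongs to $G$ by virtue of $G$ being a group. Together these give the desired topological-group structure on $G$.
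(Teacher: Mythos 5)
Your argument is correct. Note that the paper does not reprove this statement at all --- it is quoted directly from Ford's paper with only a citation --- so there is no in-text proof to compare against; your verification (joint continuity of multiplication via one star-refinement plus uniform continuity of $g_0$ only, and continuity of inversion via uniform continuity of $g_0^{-1}\in G$ evaluated at $z=g^{-1}(x)$) is the standard, and essentially Ford's original, argument, and it is consistent with Remark~\ref{rA.1}, where the sets $O_u$ (your $V(e,u)$) are taken as the neighbourhood base at the identity and the resulting uniformity is identified with the right uniformity on $G$.
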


\begin{rem}\label{rA.1}
{\rm (a) The sets $O_u=\{g\in G\mid \forall x\in X \ g(x)\in\st(x,
u)\}$, $u\in \mathcal U$, constitute  a base of nbds (not open) of
the unit element $e$.

(b) The uniformity of uniform convergence in Theorem~\ref{tA.1}
coincides with the right uniformity on $G$~\cite[Chapter 2, Exercise
2]{RD}\label{rA.1}.

(c) We can add the following to Theorem~\ref{tA.1}. Uniformity
$\mathcal U$ is an equiuniformity with respect to  the natural
action $G\curvearrowright X$ and, hence, $(G, X, \alpha)$ is a
$G$-Tychonoff space. In fact, each element of $G$ is a uniformly
continuous map. Therefore, the uniformity $\mathcal U$ is saturated.

For arbitrary $u\in\mathcal U$ take $v\in\mathcal U$ such that
$v\ast\succ u$ and a nbd $O_v$ of $e$ in $G$. Then $O_v v\succ u$.
Hence, $\mathcal U$ is bounded.}
\end{rem}

Recall that a  bijective map $f$ of uniform spaces is called a {\it uniform
equivalence} if $f$ and $f^{-1}$  are
uniformly continuous. We denote by $H(X, {\mathcal U})$  the  group of uniform
equivalences of a uniform space $(X, {\mathcal U})$. An evident
consequence of Remark~\ref{rA.1}(a) is the following corollary.

\begin{cor}\label{cA.1}
If $w(\mathcal U)\leq\aleph_0$ for $(X, {\mathcal U})$, then  $H(X,
{\mathcal U})$ endowed with  the topology of uniform convergence is metrizable.
\end{cor}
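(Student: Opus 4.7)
The plan is to invoke the Birkhoff--Kakutani metrization theorem for topological groups, after verifying that the two hypotheses (first countability at the identity and $T_0$/Hausdorffness) follow directly from the countable weight of $\mathcal U$ and the definition of the topology of uniform convergence.

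First, by Theorem~\ref{tA.1}, $H(X, \mathcal U)$ is a topological group in the topology of uniform convergence, so it suffices to show that this group is Hausdorff and first countable. For first countability, let $\{u_n\}_{n\in\mathbb N}$ be a countable base for $\mathcal U$, which exists by hypothesis. By Remark~\ref{rA.1}(a), the family $\{O_u\}_{u\in\mathcal U}$ is a neighborhood base at $e$. Given any $u\in\mathcal U$, choose $n$ with $u_n\succ u$; then for every $g\in O_{u_n}$ and every $x\in X$, $g(x)\in\st(x,u_n)\subseteq\st(x,u)$, so $O_{u_n}\subseteq O_u$. Hence $\{O_{u_n}\}_{n\in\mathbb N}$ is a countable neighborhood base at $e$.

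For Hausdorffness, I would show $\bigcap_{u\in\mathcal U} O_u=\{\id\}$, which suffices in a topological group. If $g\in O_u$ for every $u\in\mathcal U$, then $g(x)\in\st(x,u)$ for all $x\in X$ and all $u\in\mathcal U$; since $\mathcal U$ is a (separating) uniformity on $X$, $\bigcap\{\st(x,u):u\in\mathcal U\}=\{x\}$, so $g(x)=x$ for every $x\in X$, i.e.\ $g=\id$. With both hypotheses verified, the Birkhoff--Kakutani theorem yields that $H(X,\mathcal U)$ is metrizable.

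I do not anticipate any serious obstacle: the proof is essentially a bookkeeping exercise combining Theorem~\ref{tA.1}, Remark~\ref{rA.1}(a), and a classical metrization theorem for topological groups. The only mildly delicate point is to remember that the sets $O_u$, though not required to be open, still form a neighborhood base at $e$, which is all that is needed for first countability.
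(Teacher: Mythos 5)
Your proof is correct and takes essentially the same route the paper intends: the paper states the corollary as an ``evident consequence'' of Remark~\ref{rA.1}(a), i.e.\ a countable base of $\mathcal U$ yields the countable neighbourhood base $\{O_{u_n}\}$ at $e$, and the Birkhoff--Kakutani theorem finishes the argument. Your explicit check of Hausdorffness via $\bigcap_{u}O_u=\{\id\}$ merely fills in the one detail the paper leaves implicit.
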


It is well known that there is a unique uniformity  $\mathcal U$ an a compactum $X$ and
$\Hom (X)=H(X, {\mathcal U})$.

\begin{pro}\label{pA.1}
If $X$ is a compactum then the topology of uniform convergence on
any subgroup $G$ of $\Hom (X)$ coincides with the compact open
topology.
\end{pro}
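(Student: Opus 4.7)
The plan is to verify coincidence of the two topologies on $G$ by showing that their identity-neighborhood filters agree. Both topologies are group topologies on $G$: the uniform convergence topology by Theorem~\ref{tA.1} applied to the unique uniformity $\mathcal{U}$ of the compactum $X$, and the compact-open topology because $\Hom(X)$ with the compact-open topology is a topological group whenever $X$ is a compactum (a classical result of Arens). So it suffices to compare bases at $\id_X$: on one side the sets $O_u\cap G$, $u\in\mathcal{U}$, and on the other finite intersections of subbasic sets $[K,V]\cap G:=\{g\in G:g(K)\subset V\}$ with $K\subset V$, where $K\subset X$ is compact and $V\subset X$ is open.

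First I would show that the uniform convergence topology is finer than the compact-open one. Given such a subbasic $[K,V]$, I take the open cover $u=\{V,\,X\setminus K\}$; since on a compactum every finite open cover is a uniform cover of $\mathcal{U}$, and one checks $\st(x,u)\subset V$ for each $x\in K$, so $O_u\cap G\subset [K,V]\cap G$. A finite intersection of subbasic neighborhoods is handled by taking the common refinement of the corresponding $u$'s.

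The main step is the reverse direction. Given $u\in\mathcal{U}$, I would choose a finite open cover $v=\{V_1,\ldots,V_n\}$ of $X$ that star-refines $u$, and then invoke the shrinking lemma for compact Hausdorff spaces to obtain an open cover $\{W_1,\ldots,W_n\}$ with $\overline{W_i}\subset V_i$. The compact-open neighborhood
\[
N=\bigcap_{i=1}^{n}\bigl[\,\overline{W_i},\,\st(V_i,v)\,\bigr]
\]
contains $\id_X$. For any $g\in N$ and $x\in X$, picking $i$ with $x\in W_i$ places both $x$ and $g(x)$ in $\st(V_i,v)$, which by the star-refinement property sits inside a single element of $u$, so $g(x)\in\st(x,u)$. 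Hence $N\cap G\subset O_u\cap G$.

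The only nontrivial point is the star-refinement bookkeeping in the second step; everything else is routine, and no tools beyond the uniqueness of the uniformity on a compactum and the shrinking lemma are required.
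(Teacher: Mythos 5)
Your proof is correct and follows essentially the same route as the paper's: both directions are handled by comparing neighborhood filters at the identity, using a uniform cover adapted to $[K,V]$ for one inclusion and a finite subcover with star-refinement (closures as the compact sets, stars as the open sets) for the other. The only differences are cosmetic --- you use the explicit cover $\{V, X\setminus K\}$ and the shrinking lemma where the paper uses $\st(K,u)\subset O$ and a chain $v\ast\succ w\ast\succ u$.
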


\begin{proof}
Let $\mathcal U$ be the unique uniformity on $X$, $\tau_{\mathcal
U}$ and $\tau_{CO}$ be the topology of uniform convergence and the
compact open topology on $G$, respectively.

In order to prove the inclusion $\tau_{\mathcal U}\supset\tau_{CO}$
it is enough to show that any nbd in compact open topology of the
unity  $e\in G$ contains a nbd of  $e$ in the topology of uniform
convergence. A  typical  sub-basic nbd of  $e$ in the topology
$\tau_{CO}$  looks like  $[K, O]=\{g\in G \mid g(K)\subset O\}$,
where $O$ is an open subset of $X$, $K\subset O$ and $K$ is compact.
There is $u\in\mathcal U$ such that $\st (K, u)\subset O$. Then
$O_u\subset [K, O]$.

For the proof of the converse inclusion take arbitrary nbd of  $e$
in the topology $\tau_{\mathcal U}$ of the form $O_u$, $u\in\mathcal
U$. Let $w, v\in\mathcal U$ be such that $v\ast\succ w\ast\succ u$.
Take a finite subcover  $\{V_1,\dots, V_n\}\subset v$ of $X$ and let
$\st (V_k, v)\subset W_k$, $W_k\in w$, $k=1,\dots, n$. Then
$W=\bigcap\{[\cl (V_k), W_k] \mid  k=1,\dots, n\}$ is a nbd of $e$
in the topology $\tau_{CO}$ and $W\subset O_u$.
\end{proof}

\begin{cor}\label{cA.2}
If $w(\mathcal U)\leq\aleph_0$ for a totally bounded uniformity
$\mathcal U$ on $X$ then $H(X, {\mathcal U})$ in the topology of
uniform convergence is separable and metrizable.
\end{cor}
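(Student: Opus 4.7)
The plan is to reduce to the compact case settled by Proposition~\ref{pA.1}, by embedding $H(X, \mathcal U)$ into the homeomorphism group of the completion of $(X, \mathcal U)$.

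First I would pass to the completion $\tilde X$ of $(X, \mathcal U)$ with the extended uniformity $\tilde{\mathcal U}$. Total boundedness of $\mathcal U$ makes $\tilde X$ compact, while $w(\tilde{\mathcal U}) \le \aleph_0$ makes $\tilde X$ metrizable. Every uniform equivalence $f$ of $(X, \mathcal U)$ together with its inverse extends to a pair of uniformly continuous maps of $(\tilde X, \tilde{\mathcal U})$ which are mutually inverse by density and continuity, so $f$ extends to some $\tilde f \in \Hom(\tilde X)$. This yields a natural injective group homomorphism $\Phi: H(X, \mathcal U) \to \Hom(\tilde X)$, $\Phi(f) = \tilde f$.

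Next I would check that $\Phi$ is a topological embedding, where $H(X, \mathcal U)$ carries the topology of uniform convergence induced by $\mathcal U$ and $\Hom(\tilde X)$ carries that induced by $\tilde{\mathcal U}$. Basic nbds of the unity are the sets $O_u$ described in Remark~\ref{rA.1}(a). Using the canonical correspondence between the bases of $\mathcal U$ and $\tilde{\mathcal U}$ (each $u \in \mathcal U$ is the trace on $X$ of a cover $\tilde u$ of $\tilde X$), the density of $X$ in $\tilde X$, and a star-refinement $u' \ast\succ u$, one verifies the inclusions $\Phi^{-1}(O_{\tilde u}) \subset O_u$ and $\Phi(O_{u'}) \subset O_{\tilde u}$. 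This topology-matching is the main technical step, though it is essentially mechanical given the uniform continuity of the extensions.

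Finally, Proposition~\ref{pA.1} identifies the topology of uniform convergence on $\Hom(\tilde X)$ with the compact open topology. Since $\tilde X$ is a compact metrizable space, the function space $C(\tilde X, \tilde X)$ with the compact open topology (equivalently, the sup-metric for any compatible metric on $\tilde X$) is separable and metrizable, so its subspace $\Hom(\tilde X)$ is separable metrizable, and hence so is its subgroup $\Phi(H(X, \mathcal U))$. Transferring back along the embedding $\Phi$ completes the proof.
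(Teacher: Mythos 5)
Your proposal is correct and follows essentially the same route as the paper: complete $(X,\mathcal U)$ to a metrizable compactum, extend each uniform equivalence to a homeomorphism of the completion to get a group monomorphism into $\Hom$ of the completion, invoke Proposition~\ref{pA.1} to identify the uniform-convergence topology with the compact open topology, and conclude separable metrizability from the compact metrizable target. The only difference is cosmetic: you spell out the verification that the embedding is topological, which the paper compresses into the remark that $\mathcal U$ is the restriction of the unique uniformity of the completion.
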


\begin{proof}
The completion $bX$ of $X$ with respect to $\mathcal U$ is a
metrizable compactum. Since each homeomorphism $g\in H(X, {\mathcal
U})$ is uniformly continuous, it admits a unique  extension $g':
bX\to bX$ which is uniformly continuous with respect to the unique
uniformity on $bX$. Clearly, $(g\circ h)'=g'\circ h'$ and
$1_X'=1_{bX}$. In other words, the map $g\mapsto g'$ is a
monomorphism of the group $H(X, {\mathcal U})$ into the group $\Hom
(bX)$. Thus,  identifying $H(X, {\mathcal U})$  with its image in
$\Hom (bX)$, we can assume that $H(X, {\mathcal U})$ is a subgroup
of $\Hom (bX)$. Then by Proposition~\ref{pA.1} the topology of
uniform convergence on $H(X, {\mathcal U})$ coincides with the
compact open topology. The latter topology is separable and
metrizable. The rest follows from the coincidence of $\mathcal U$
with the restriction on $X$ of the the unique uniformity of $bX$.
\end{proof}

\begin{thm}\label{tA.2}
Let $(G, X, \alpha)$ be a $G$-space with an open effective action,
$\mathcal U$ be the maximal equiuniformity on $X$. Then
\begin{enumerate}
\item each element
of $G$ is a uniform equivalence (with respect to $\mathcal U$),
\item the topology of uniform convergence $\tau_{\mathcal U}$ on $G$ is
coarser than the original one,
\item $((G, \tau_{\mathcal U}), X, \alpha)$
is a $G$-space with an open action,
\item $\mathcal U$ is the maximal equiuniformity on $X$ in the
$G$-Tychonoff space $((G, \tau_{\mathcal U}), X, \alpha)$.
\end{enumerate}
\end{thm}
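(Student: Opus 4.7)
The plan is to handle the four claims in order, each following quickly once one observes how boundedness and saturation of $\mathcal U$ transfer between the two topologies on $G$.

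For (1), saturation of the equiuniformity $\mathcal U$ means every $g\in G$ is uniformly continuous as a map $(X,\mathcal U)\to(X,\mathcal U)$. Applying this to $g^{-1}\in G$ as well shows that $g$ is a uniform equivalence. For (2), it suffices to show each basic nbd $O_u=\{g\in G\mid\forall x\in X,\ gx\in\st(x,u)\}$ of $e$ in $\tau_{\mathcal U}$ (see Remark~\ref{rA.1}(a)) is also a nbd of $e$ in the original topology $\tau$. Here I use boundedness of $\mathcal U$ for the original action: given $u\in\mathcal U$, there is $O\in N_{(G,\tau)}(e)$ with $\{Ox\mid x\in X\}\succ u$, so for each $x$ some $U_x\in u$ contains $Ox\ni x$, whence $Ox\subset\st(x,u)$ and thus $O\subset O_u$.

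For (3), once (1) is established, Theorem~\ref{tA.1} gives that $(G,\tau_{\mathcal U})$ is a topological group. The main effort is to verify continuity of $\alpha:(G,\tau_{\mathcal U})\times X\to X$ at an arbitrary point $(g_0,x_0)$. Given a nbd $V$ of $g_0x_0$, I choose successively $u\in\mathcal U$ with $\st(g_0x_0,u)\subset V$, a star refinement $v\ast\succ u$, a cover $w\in\mathcal U$ witnessing uniform continuity of $g_0$ in the sense $w\succ g_0^{-1}v$, and a star refinement $w'\ast\succ w$. Then $g_0O_{w'}$ is a $\tau_{\mathcal U}$-nbd of $g_0$ and $\st(x_0,w')$ is a nbd of $x_0$; the standard triple star-refinement argument shows that for $g=g_0h\in g_0O_{w'}$ and $y\in\st(x_0,w')$, the points $x_0$ and $hy$ lie in a common element of $w$, so $g_0x_0$ and $gy=g_0(hy)$ lie in a common element of $v$, and hence $gy\in\st(g_0x_0,u)\subset V$. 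Openness of the action in $\tau_{\mathcal U}$ is then immediate from (2): any $\tau_{\mathcal U}$-nbd of $e$ is a $\tau$-nbd of $e$, so the original openness property $x\in\int(Ox)$ is inherited.

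For (4), I first check that $\mathcal U$ remains an equiuniformity with respect to the $(G,\tau_{\mathcal U})$-action. Saturation is given by (1), and for boundedness, given $u\in\mathcal U$ and a star refinement $v\ast\succ u$, the set $O_v$ is itself a $\tau_{\mathcal U}$-nbd of $e$ satisfying $\{O_vx\mid x\in X\}\succ u$ by the definition of $O_v$. For maximality, suppose $\mathcal V$ is any equiuniformity on $X$ for the $(G,\tau_{\mathcal U})$-action; I will show $\mathcal V$ is also an equiuniformity for the original $(G,\tau)$-action, whence $\mathcal V\subset\mathcal U$ by maximality of $\mathcal U$. Saturation of $\mathcal V$ is independent of the topology on $G$, and boundedness transfers upward: a nbd of $e$ witnessing $\tau_{\mathcal U}$-boundedness is automatically a $\tau$-nbd of $e$ because $\tau_{\mathcal U}\subset\tau$ by (2).

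The main obstacle is the continuity verification in (3): one must keep track of three levels of star refinement together with the uniform continuity of the fixed element $g_0$, and package this so that the chosen $\tau_{\mathcal U}$-nbd of $g_0$ has the form $g_0O_{w'}$, making explicit use of the group structure in the uniform convergence topology. The remaining items are bookkeeping once the duality between the $\tau$- and $\tau_{\mathcal U}$-nbds of $e$ (given by (2) and the $O_v$ construction) is in place.
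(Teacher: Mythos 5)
Your proof is correct. Items (1)--(3) run along the same lines as the paper's: (1) is immediate from saturation of $\mathcal U$; for (2) you place a $\tau$-neighbourhood $O$ of $e$ inside each basic set $O_u$ using boundedness of $\mathcal U$, where the paper uses the specific base $\{Ux\mid x\in X\}$ of $\mathcal U$ coming from openness of the action so that $U\subset O_u$ holds directly --- same content; and for (3) you verify joint continuity of $\alpha$ in $\tau_{\mathcal U}$ by hand with a three-fold star-refinement, where the paper simply invokes Remark~\ref{rA.1}(c); your verification is sound and more self-contained. The genuine divergence is in (4). The paper proves the stronger inclusion $U\subset\Int_{\tau_{\mathcal U}}O_u\subset O_u$ already in step (2) and combines it with $O_ux=Ux$ and the description of the maximal equiuniformity of an \emph{open} action as the one generated by the covers $\{Wx\mid x\in X\}$, $W$ a neighbourhood of $e$, to conclude that the two maximal equiuniformities have literally the same base of covers. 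You instead give a soft transfer argument: $\mathcal U$ is itself an equiuniformity for $((G,\tau_{\mathcal U}),X,\alpha)$, and every equiuniformity for the coarser group topology is automatically one for the finer topology (saturation does not see the topology of $G$, and boundedness transfers because $\tau_{\mathcal U}\subset\tau$ by (2)), hence is contained in $\mathcal U$ by maximality for the original action. Your route is shorter, does not need the characterization of the maximal equiuniformity of an open action, and in fact does not use openness at all in (4); the paper's computation buys the explicit identification $(\Int_{\tau_{\mathcal U}}O_u)x=Ux$ of the neighbourhood traces on $X$, which makes the coincidence of the two uniformities completely concrete rather than an abstract order-theoretic consequence.
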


\begin{proof} (1)  follows  immediately  from the definition of an
equiuniformity.

(2) Let $N_G(e)$ denote the base of nbds of  $e$ in the original
topology. If the action is open then the covers $u=\{Ux \mid  x\in
X\}$, $U\in N_G(e)$, are the base of the maximal equiuniformity
$\mathcal U$ on $X$. Since for $V=V^{-1}$, $V^3\subset U$ we have
$\{Vx\mid  x\in X\}\ast\succ\{Ux \mid  x\in X\}$, the base of nbds
(not open) of  $e$ in  the topology of uniform convergence
$\tau_{\mathcal U}$ consists of the sets $O_u=\{g\in G\mid \forall
x\in X\ g(x)\in Ux\}$,  $U\in N_G(e)$. Hence, it suffices to prove
that for every set $O_u$, its interior $\Int_{\tau_{\mathcal U}}O_u$
is  open in the original topology. In  turn,  it is enough to show
that $U\subset\Int_{\tau_{\mathcal U}} O_u$.

Indeed, for arbitrary $g\in U$, there is $V\in N_G(e)$ such that
$Vg\in U$. For any $h\in O_v$ we have $h(x)\in Vx$ for all $x\in X$.
Thus $hg(x)\in Vg(x)\subset Ux$ for all $x\in X$. Therefore, $hg\in
O_u$ and $O_vg\subset O_u$  showing that $g\in \Int_{\tau_{\mathcal
U}}O_u$. Hence, $U\subset\Int_{\tau_{\mathcal U}} O_u$, as required.

(3) From Remark~\ref{rA.1} (c) it follows that $((G, \tau_{\mathcal U}),
X, \alpha)$ is a $G$-space, and the action is open since the
topology $\tau_{\mathcal U}$ is coarser than the original one.

(4) From the inclusions $U\subset\Int_{\tau_{\mathcal U}} O_u\subset
O_u$ for $U\in N_G(e)$ and the equality $O_ux=Ux$, $x\in X$, it
follows that $(\Int_{\tau_{\mathcal U}} O_u)x=Ux$, $x\in X$. Hence,
the maximal equiuniformity on $X$ for the action of $G$ in the
topology $\tau_{\mathcal U}$ is the same as the original one.
\end{proof}

The following lemma immediately follows from the definition of the
topology of uniform convergence.

\begin{lem}\label{lA.1}
Let $(G, X, \alpha)$ be a $G$-space with equiuniformities ${\mathcal
U}\supset{\mathcal V}$ on $X$. Then for the topologies
$\tau_{\mathcal U}$, $\tau_{\mathcal V}$ of uniform convergence on
$G$ we have $\tau_{\mathcal U}\supset\tau_{\mathcal V}$.
\end{lem}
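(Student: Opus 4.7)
The plan is to unpack the definition of the topology of uniform convergence via the neighborhood base at the identity recorded in Remark~\ref{rA.1}(a). That remark identifies, for any equiuniformity $\mathcal{W}$ on $X$, a base of (not necessarily open) neighborhoods of the unit $e\in G$ in $\tau_{\mathcal{W}}$ given by the sets
$$O_w=\{g\in G\mid g(x)\in\st(x,w)\ \text{for all}\ x\in X\},$$
with $w$ ranging over $\mathcal{W}$. Both $\tau_{\mathcal{U}}$ and $\tau_{\mathcal{V}}$ are group topologies on $G$ by Theorem~\ref{tA.1}, so to compare them globally it will suffice to compare the neighborhood filters at $e$ and then translate.

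Given the hypothesis $\mathcal{U}\supset\mathcal{V}$, I simply observe that every $v\in\mathcal{V}$ is also a member of $\mathcal{U}$, so each basic $\tau_{\mathcal{V}}$-neighborhood $O_v$ of $e$ is already a basic $\tau_{\mathcal{U}}$-neighborhood of $e$. Thus the $\tau_{\mathcal{U}}$-neighborhood filter at $e$ is finer than the $\tau_{\mathcal{V}}$-neighborhood filter at $e$, and by left translation this comparison propagates to every point of $G$, yielding $\tau_{\mathcal{U}}\supset\tau_{\mathcal{V}}$.

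There is no real obstacle here — the authors themselves flag the statement as ``immediately follows'' — and the only subtlety worth naming is that the $O_w$ are not asserted to be open, merely to form a base in the filter sense of Remark~\ref{rA.1}(a); this is exactly what the filter-comparison argument uses, so the mild subtlety is absorbed costlessly.
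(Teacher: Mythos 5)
Your argument is correct and is exactly the unpacking the paper intends when it says the lemma ``immediately follows from the definition of the topology of uniform convergence'': since $\mathcal V\subset\mathcal U$, each basic neighborhood $O_v$ ($v\in\mathcal V$) of $e$ is among the $O_u$ ($u\in\mathcal U$), and comparing the two group topologies at $e$ suffices. No gap; the appeal to Theorem~\ref{tA.1} and Remark~\ref{rA.1}(a) is the right justification for translating the comparison from $e$ to all of $G$.
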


\begin{thm}\label{tA.4} The following conditions are equivalent for a (separable) metrizable space $X$:
\begin{itemize}
\item[(a)] $X$ is a coset space of a (separable) metrizable group
$G$;
\item[(b)] there is a (totally bounded) metric $\rho$ on $X$ such that $X$ is a coset space of the
group of uniform equivalences with respect to $\rho$ in the topology
of uniform convergence.
\end{itemize}
\end{thm}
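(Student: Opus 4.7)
The implication (b)$\Rightarrow$(a) is essentially a restatement of Corollaries~\ref{cA.1} and~\ref{cA.2}: any metric $\rho$ compatible with the topology of $X$ induces a uniformity $\mathcal{U}_\rho$ of countable weight (totally bounded when $\rho$ is), so $H(X,\rho)$ in the topology of uniform convergence is automatically metrizable (and separable metrizable when $\rho$ is totally bounded).

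For (a)$\Rightarrow$(b), write $X=G/H$ with $G$ (separable) metrizable, let $K$ be the kernel of the natural action, and pass to the effective quotient $G_0=G/K$, which is again (separable) metrizable and acts openly, continuously, and transitively on $X$. The plan is to produce an equiuniformity $\mathcal{U}$ on $X$ of countable weight---totally bounded in the separable case---let $\rho$ be any compatible metric generating $\mathcal{U}$, and verify that $X$ is a coset space of $H(X,\rho)=H(X,\mathcal{U})$ in the topology of uniform convergence $\tau_{\mathcal{U}}$.

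The construction of $\mathcal{U}$ splits into two subcases. In the general metrizable case take $\mathcal{U}$ to be the maximal equiuniformity on the open effective $G_0$-space $X$; its base $\{\{Ux:x\in X\}:U\in N_{G_0}(e)\}$ is countable because $G_0$ is first countable, so $w(\mathcal{U})\leq\aleph_0$. In the separable metrizable case $G_0$ is $\omega$-narrow, so Corollary~\ref{c2.9} supplies a $G_0$-compactification $B$ of $X$ with $w(B)\leq w(X)\leq\aleph_0$; take $\mathcal{U}$ to be the restriction to $X$ of the unique uniformity of $B$, which is a totally bounded equiuniformity of countable weight. In either case $\mathcal{U}$ is metrizable by a (totally bounded) compatible metric $\rho$, and by Theorem~\ref{tA.1} with Corollary~\ref{cA.1} (or Corollary~\ref{cA.2}) the group $H(X,\rho)=H(X,\mathcal{U})$ endowed with $\tau_{\mathcal{U}}$ is a (separable) metrizable topological group.

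It remains to check that the natural action of $H(X,\rho)$ on $X$ is continuous, transitive, and open. Continuity is Remark~\ref{rA.1}(c); transitivity is automatic, since the saturation of $\mathcal{U}$ yields an inclusion $G_0\hookrightarrow H(X,\mathcal{U})$ and $G_0$ already acts transitively. The main obstacle is openness. The argument of Theorem~\ref{tA.2}(2)--(3) goes through once one observes that the equiuniformity hypothesis gives, for every $u\in\mathcal{U}$, some $V\in N_{G_0}(e)$ with $\{Vx:x\in X\}\succ u$, whence $V\subset O_u$; this forces $\tau_{\mathcal{U}}|_{G_0}$ to be coarser than the original topology on $G_0$, and openness of the action of $G_0$ is preserved under such coarsening. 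Finally, for every $O\in N_{H(X,\mathcal{U})}(e)$ and $x\in X$, the inclusion $(O\cap G_0)x\subset Ox$ together with $x\in\int\bigl((O\cap G_0)x\bigr)$ gives $x\in\int(Ox)$. This openness step is the crux of the argument; granted it, $X$ is a coset space of $H(X,\rho)$, completing (a)$\Rightarrow$(b).
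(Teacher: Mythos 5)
Your proof is correct and follows essentially the same route as the paper's: both directions hinge on constructing a (totally bounded) equiuniformity of countable weight, using Corollaries~\ref{cA.1}/\ref{cA.2} for (b)$\Rightarrow$(a), and for (a)$\Rightarrow$(b) using the compactification of Theorem~\ref{tA.3} (resp.\ the maximal equiuniformity in the non-separable case) together with the coarsening argument of Theorem~\ref{tA.2} to keep the action open in the topology of uniform convergence and then enlarging to the full group $H(X,\mathcal{U})$. The only deviation is that you explicitly pass to the effective quotient $G/K$ before invoking Theorem~\ref{tA.2}, a hypothesis the paper leaves implicit.
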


\begin{proof}
$(a)\Longrightarrow (b)$ Let a (separable) metrizable space $X$ be
a coset space of a (separable) metrizable group $G$. Then $(G, X,
\alpha)$ is a $G$-space with a natural action of $G$ on $X$ by left
translations (open and transitive).

If $G$ is separable and metrizable, then by Theorem~\ref{tA.3} there
is an equivariant embedding $(\id, f):(G, X, \alpha)\to (G, B,
\tilde\alpha)$ where $B$ is a metrizable compactum. There is a
unique (up to a metric equivalence) totally bounded metric
$\tilde\rho$ on $B$, and all elements of $G$ are uniform
equivalences with respect to  the correspondent unique totally
bounded uniformity ${\mathcal U}_{\tilde\rho}$ with countable weight
which is an equiuniformity. Let $\rho=\tilde\rho|_{X\times X}$. Then
$\rho$ is a totally bounded metric on $X$ with respect to which all
elements of $G$ are uniform equivalences. Moreover, the
correspondent totally bounded equiuniformity  ${\mathcal
U}_\rho={\mathcal U}_{\tilde\rho}|_{X}$ is weaker than the maximal
equinuformity on $X$.

If $G$ is metrizable then the weight of the maximal equiuniformity
${\mathcal U}$ on $X$ is countable. In fact, the covers $\{Ox\mid
x\in X\}$, $O\in N_G(e)$, constitute a  base of the maximal
equiuniformity~\cite{CK3}. Since $G$ is metrizable we can take the
countable base of the maximal equiuniformity. Let $\rho$ be a metric
on $X$ which generates this uniformity.

In both cases, by Theorem~\ref{tA.2} and Lemma~\ref{lA.1}, the
topology $\tau_\rho$ of uniform convergence with respect to $\rho$
on $G$ is weaker than the original topology on $G$. Hence, by
Theorem~\ref{tA.2}, $((G, \tau_{\rho}), X, \alpha)$ is a $G$-space
with an open action. Evidently, the natural action $\alpha': H(X,
{\mathcal U})\times X\to  X$  of the whole group of uniform
equivalences  is open since $(G, \tau_{\rho})$ is a subgroup of
$(H(X, {\mathcal U}), \tau_{\mathcal U})$ and $\alpha'|_{G\times
X}=\alpha$.

$(b)\Longrightarrow (a)$ The group $G$ of uniform equivalences with
the topology of uniform convergence with respect to the (totally
bounded) uniformity with countable weight (which corresponds  to the
(totally bounded) metric) is (separable) metrizable by
Corollary~\ref{cA.1} (Corollary~\ref{cA.2}). Thus, $G$ is
(separable) metrizable whenever  $X$ is so.
\end{proof}

\begin{lem}\label{lA.5} Let $(G, X, \alpha)$ be a $G$-space,
$H$ be a dense subgroup of $G$. Then the set of points $D_G$, where
the action $\alpha$ is $d$-open coincides with the set of points
$D_H$, where the action $\alpha|_{H\times X}$ is $d$-open.
\end{lem}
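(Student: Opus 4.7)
The plan is to reduce both the $G$-condition and the $H$-condition to the same family of closures, using the density of $H$ combined with continuity of the orbit maps. The key reduction is an observation about how dense subgroups act on single points.

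Concretely, the preparatory step I would carry out first is to prove that for every open $U\subset G$ with $e\in U$, the set $(U\cap H)x$ is dense in $Ux$. This is immediate: density of $H$ in $G$ gives that $U\cap H$ is dense in $U$, and the orbit map $g\mapsto \alpha(g,x)$ is continuous, so it sends a dense subset of $U$ to a dense subset of the image $Ux$. Taking closures one obtains the equality
\[
\cl\bigl((U\cap H)x\bigr)=\cl(Ux).
\]
This is the whole engine of the lemma; everything else is unpacking the definition of $d$-openness at a point.

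Given this, the inclusion $D_G\subset D_H$ goes as follows. Pick $x\in D_G$ and an arbitrary $V\in N_H(e_H)$ in the subspace topology on $H$. Write $V=U\cap H$ for some $U\in N_G(e)$. By the preparatory equality, $\cl(Vx)=\cl(Ux)$, and since $x\in D_G$ we have $x\in\Int\cl(Ux)=\Int\cl(Vx)$, so $x\in D_H$. For the reverse inclusion $D_H\subset D_G$, pick $x\in D_H$ and $O\in N_G(e)$; set $V=O\cap H\in N_H(e_H)$, so once more $\cl(Vx)=\cl(Ox)$, and $x\in\Int\cl(Vx)=\Int\cl(Ox)$ gives $x\in D_G$.

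There is no real obstacle here; the only point that deserves care is making sure the neighborhood bases for $H$ and $G$ interact correctly. Since the paper's convention is that $N_G(e)$ consists of open neighborhoods of $e$, every element of $N_H(e_H)$ is of the form $U\cap H$ for some $U\in N_G(e)$, which is exactly what is needed for both directions. The continuity of $\alpha^x\colon G\to X$, $g\mapsto \alpha(g,x)$, which is part of the definition of a $G$-space, is what permits the transport of density from $G$ to orbits and thereby identifies the two families of closures.
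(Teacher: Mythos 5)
Your proposal is correct and follows essentially the same route as the paper's proof: both rest on the observation that $(O\cap H)x$ is dense in $Ox$ (via density of $H$ and continuity of the orbit map), hence $\cl\bigl((O\cap H)x\bigr)=\cl(Ox)$, and both then identify neighborhoods in $N_H(e)$ with traces $O\cap H$ of neighborhoods $O\in N_G(e)$. Your version merely spells out the two inclusions that the paper compresses into a single ``iff.''
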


\begin{proof} Since  $N_H(e)=H\cap N_G(e)$,  for any element $O\in N_G(e)$ the set  $O'=(O\cap H)\in N_H(e)$
is dense in $O$. Therefore, for any point $x\in X$ the set $O'x$ is
dense in $Ox$ and ${\rm cl} (Ox)={\rm cl} (O'x)$. Hence, $x\in{\rm
int} ({\rm cl} (Ox))$ iff $x\in{\rm int} ({\rm cl} (O'x))$.
\end{proof}

\begin{thm}\label{tA.5} The following conditions are equivalent for a complete metrizable space $X$:
\begin{itemize}
\item[(a)] $X$ is a coset space of a complete metrizable group,
\item[(b)] there exist a metrizable group $G$ and a $d$-open action $G\times X\to X$  with only one
component of action such that the set of points in $\beta_GX$ where
the action $G\times \beta_GX\to \beta_GX$ is $d$-open coincides with
$X$.
\end{itemize}
\end{thm}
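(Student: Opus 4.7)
Both implications rely on extending the $G$-action on $X$ to the $\rho G$-action on $\beta_GX$ and on \cite[Theorem 3]{K4}, which asserts that a $d$-open action of a \v Cech complete group is open. For $(b)\Rightarrow(a)$, I would start with a metrizable group $G$ whose $d$-open action on $X$ has a single component and whose $d$-open locus in $\beta_GX$ equals $X$. The Raikov completion $\rho G$ is then complete metrizable, and the action extends continuously to $\rho G\times \beta_GX\to \beta_GX$. Because $G$ is dense in $\rho G$, Lemma~\ref{lA.5} shows that the $d$-open locus of the $\rho G$-action on $\beta_GX$ is still $X$; invariance of this locus makes $X$ itself $\rho G$-invariant. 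Using the identities $\cl_\beta(Ox)\cap X=\cl_X(Ox)$ and $\int_\beta A\cap X\subseteq \int_X(A\cap X)$, I would then deduce that the restricted action $\rho G\curvearrowright X$ is $d$-open at every $x\in X$, and invoke \cite[Theorem 3]{K4} for the \v Cech complete group $\rho G$ to upgrade $d$-openness to openness. Since each $\rho G$-orbit contains a $G$-orbit and is therefore dense in the sole component $X$, the single open dense $\rho G$-orbit must exhaust $X$, yielding $X=\rho G/\rho G_x$, a coset space of a complete metrizable group.

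For $(a)\Rightarrow(b)$, I would take $G:=G_0$ itself, where $X=G_0/H$ with $G_0$ complete metrizable. The coset action is open and transitive, hence $d$-open with a single component, so it remains to identify the $d$-open locus $D\subseteq \beta_{G_0}X$ with $X$. For $X\subseteq D$: fix $x\in X$ and $O\in N_{G_0}(e)$; since the action on $X$ is open, $Ox$ is $X$-open, so $Ox=V\cap X$ for some $\beta$-open $V\ni x$. Density of $X$ in $\beta_{G_0}X$ then gives $V\subseteq \cl_\beta(V\cap X)=\cl_\beta(Ox)$, whence $x\in \int_\beta \cl_\beta(Ox)$. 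For $D\subseteq X$: I would argue by contradiction, assuming $y\in D\setminus X$. Then $G_0y\subseteq \beta_{G_0}X\setminus X$ by $G_0$-invariance of $X$, and applying \cite[Theorem 3]{K4} to the \v Cech complete group $G_0$ (either pointwise at $y$, or by restricting to the invariant set $D$ on which the action is everywhere $d$-open) would upgrade $d$-openness at $y$ to openness, producing a nonempty $\beta$-open subset of $Oy\subseteq \beta_{G_0}X\setminus X$---contradicting the density of $X$ in $\beta_{G_0}X$.

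The main obstacle I anticipate is the application of \cite[Theorem 3]{K4} in the proof of $D\subseteq X$: the cited theorem is naturally global, while I need its conclusion at a specific point $y$. Resolving this requires either extracting a pointwise version, or carefully restricting the action to the $G_0$-invariant subset $D$ on which the action is everywhere $d$-open. The remaining steps---transferring the $d$-open locus under Raikov completion, the topological identities comparing $\beta$- and $X$-operations, and the density-of-orbits deduction---should be routine once that point is settled.
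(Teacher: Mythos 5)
Your proposal follows essentially the same route as the paper: both directions hinge on extending the action to $(\rho G,\beta_GX)$, on Lemma~\ref{lA.5}, and on \cite[Theorem 3]{K4} applied to the invariant $d$-open locus $D$ (restricting to $D$ is exactly how the paper handles the pointwise-versus-global issue you flag, concluding that the open action on the single component $D$ is transitive and hence $D=X$), while for $X\subseteq D$ the paper simply cites \cite[Proposition 3]{K4} where you give the same density argument directly. The only slip is that openness of the restricted action yields a nonempty $D$-open, not $\beta$-open, subset of $Oy$ in your contradiction for $D\subseteq X$; since such a set is the trace on $D$ of a nonempty $\beta$-open set and $X$ is dense in $\beta_{G_0}X$, the contradiction still goes through.
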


\begin{proof}
$(a)\Longrightarrow (b)$  Let a complete metrizable space $X$ be a
coset space of a  complete metrizable group $G$. Then $(G, X,
\alpha)$ is a $G$-Tychonoff space with the  natural action of $G$ on
$X$ by left translations which is open and transitive. Hence, the
action $\alpha$ is $d$-open and has one component.

We claim that in the $G$-space $(G,\beta_G X,\alpha_\beta)$, the set
$D$ of points where the action $G\times \beta_GX\to \beta_GX$ is
$d$-open coincides with $X$.  In fact, the inclusion $X\subset D$ is
due to~\cite[Proposition 3]{K4}. By~\cite[Theorem 3]{K4} the
restriction of action of $G$ on $D$ is open and, hence, transitive.
Thus, $D=X$.

$(b)\Longrightarrow (a)$  Let $\alpha_\beta:G\times \beta_GX\to
\beta_GX$ and $\alpha_{\rho\beta}: \rho G\times \beta_GX\to
\beta_GX$ be extensions of the action $\alpha: G\times X\to X$. The
$G$-space $(\rho G, \beta_GX, \alpha_{\rho\beta})$ and the
equivariant embedding $(i, \id): (G, \beta_G X, \alpha_{\beta})\to
(\rho G, \beta_GX, \alpha_{\rho\beta})$, where $i:G\to\rho G$ is the
natural embedding, are well defined by~\cite{Mg1}. The group $\rho
G$ is complete metrizable as the Raikov completion of a metrizable
group~\cite{RD}. By Lemma~\ref{lA.5} the set of the points where the
action $\alpha_{\rho\beta}$ is $d$-open coincides with the set of
the points where the action $\alpha_{\beta}$ is $d$-open. The latter
set coincides with $X$ by~\cite[Proposition 3]{K4}.

Therefore, the $G$-space $(\rho G, X, \alpha_{\rho\beta}|_X)$ with a
$d$-open action and one component of action is well defined. Recall
that by \cite[Theorem 3]{K4}, a $d$-open action of a \v Cech
complete group is open. Further, since every complete metrizable
group is \v Cech complete, we conclude that $\alpha_{\rho\beta}|_X$
is open. But, an open action with one component is transitive.
Hence, $X$ is a coset space of $\rho G$.
\end{proof}

From the proof of Theorem~\ref{tA.5} we have.

\begin{cor}\label{cA.3} Let $G$ be a (separable) metrizable group, and let  $(G, X, \alpha)$ be a $G$-space with a $d$-open action with only  one component of action.
Assume that  the set of  the points in $\beta_GX$ where the action
$G\times \beta_GX\to \beta_GX$ is $d$-open coincides with $X$. Then
$X$ is a complete metrizable (Polish) space.
\end{cor}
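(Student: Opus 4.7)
The plan is to observe that the proof of the implication $(b)\Longrightarrow(a)$ in Theorem~\ref{tA.5} never actually uses the hypothesis that $X$ is complete metrizable: it only uses that $G$ is metrizable, that the action has a single component and is $d$-open, and that the $d$-open locus of the extended action $\alpha_\beta$ on $\beta_GX$ is exactly $X$. These are precisely the hypotheses available in the corollary. So I would first repeat that argument verbatim to obtain a $G$-space $(\rho G, X, \alpha_{\rho\beta}|_X)$ with a $d$-open action and one component of action.

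Then I would invoke the same two facts used in Theorem~\ref{tA.5}: the Raikov completion $\rho G$ of a metrizable group is complete metrizable (hence \v Cech complete), and by \cite[Theorem 3]{K4} a $d$-open action of a \v Cech complete group is automatically open. Since the action $\alpha_{\rho\beta}|_X$ has only one component, openness upgrades to transitivity, and therefore $X$ is a coset space of $\rho G$.

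To finish the argument, I still need to conclude from ``$X = \rho G / H$ for some closed subgroup $H$'' that $X$ itself is complete metrizable. This is the standard fact that a coset space of a complete metrizable group is complete metrizable; in the separable case, $\rho G$ is moreover Polish (the countable dense subset of $G$ remains dense in $\rho G$), and a coset space of a Polish group is Polish. This yields the desired dichotomy complete metrizable / Polish according as $G$ is metrizable / separable metrizable.

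I do not expect a genuine obstacle here: the work has been carried out in the proof of Theorem~\ref{tA.5}, and the corollary is really a matter of reading that proof carefully enough to notice that completeness of $X$ was assumed but not used, plus the quotation of the classical metrization result for coset spaces of complete metrizable (resp.\ Polish) groups. The one small point to verify, for clarity, is that $X$ sits as a closed topological subspace of $\beta_G X$ with the induced coset-space topology from $\rho G$, so that the identification $X \cong \rho G / H$ is compatible with its original topology; but this is immediate since $(\id, \id)$ is an equivariant embedding of $(G,X,\alpha)$ into $(\rho G, \beta_G X, \alpha_{\rho\beta})$ and the orbit map $\rho G \to X$, $g \mapsto g x_0$, is open onto $X$ by what we just proved.
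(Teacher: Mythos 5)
Your argument is correct and is exactly what the paper intends: Corollary~\ref{cA.3} is stated with the remark ``From the proof of Theorem~\ref{tA.5} we have,'' i.e.\ one observes that the $(b)\Longrightarrow(a)$ argument never uses completeness of $X$ and yields $X$ as a coset space of the complete metrizable (Polish) group $\rho G$, after which one quotes the standard fact that such coset spaces are complete metrizable (Polish). You have simply made explicit the final quotation and the compatibility of topologies, which the paper leaves implicit.
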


Since every metrizable group of infinite weight $\tau$ contains a
dense subgroup of cardinality $\tau$, Theorem~\ref{tA.5}
and~\cite[Lemma 3]{K4} imply the following corollary.

\begin{cor}\label{cA.4}
The following conditions are equivalent for a complete metrizable
space $X$:
\begin{itemize}
\item[(a)] $X$ is a coset space of a complete metrizable group of infinite weight $\tau$,
\item[(b)] there exist a metrizable group $G$ with  $|G|\leq\tau$  and a  $d$-open action $G\times X\to X$  with only one component of action
such that in $\beta_GX$ the set of points where the action $G\times \beta_GX\to \beta_GX$ is $d$-open coincides with $X$.
\end{itemize}
\end{cor}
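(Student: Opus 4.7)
The plan is to reduce Corollary~\ref{cA.4} to Theorem~\ref{tA.5} by a density/cardinality manipulation of the acting group, in both directions.

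For $(a)\Rightarrow(b)$, suppose $X=\tilde G/\tilde H$ with $\tilde G$ a complete metrizable group of weight $\tau$. Since $\tilde G$ is metrizable, $d(\tilde G)=w(\tilde G)=\tau$, so there is a dense subgroup $G\subset\tilde G$ with $|G|\le\tau$ (take the subgroup generated by a dense subset of cardinality $\tau$); as a subgroup of a metrizable group, $G$ is itself metrizable. I restrict the natural action to $G$. Each orbit $Gx$ is dense in $\tilde Gx=X$ (since $g\mapsto gx$ is continuous), so the $G$-action has only one component, and since the $\tilde G$-action is open, Lemma~\ref{lA.5} applied to the dense inclusion $G\subset\tilde G$ carries $d$-openness over to the restricted action on all of $X$.

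Next I claim $\beta_GX=\beta_{\tilde G}X$. It suffices to show $C^*_G(X)=C^*_{\tilde G}(X)$: the inclusion $\supset$ is trivial, and for $\subset$, if $f$ is $G$-uniform pick $O'\in N_{\tilde G}(e)$ with $O'\cap G$ witnessing $\varepsilon/2$-uniformity; density of $O'\cap G$ in $O'$ together with continuity of $f$ and of the action yield $|f(x)-f(\tilde g x)|\le\varepsilon/2<\varepsilon$ for every $\tilde g\in O'$, so $f$ is $\tilde G$-uniform. (This identification should be essentially \cite[Lemma~3]{K4}.) Now Theorem~\ref{tA.5} applied to $\tilde G$ shows that the $d$-openness set of the $\tilde G$-action on $\beta_{\tilde G}X$ equals $X$, and a second application of Lemma~\ref{lA.5} (to $G$ dense in $\tilde G$, both acting on the common compactum $\beta_GX=\beta_{\tilde G}X$) transfers this to the $G$-action, finishing $(b)$.

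The direction $(b)\Rightarrow(a)$ is almost immediate from Theorem~\ref{tA.5}: its hypotheses are exactly condition $(b)$ here, so $X$ is a coset space of $\rho G$, which is complete metrizable (the Raikov completion of a metrizable group is complete metrizable). For the weight bound, density of $G$ in $\rho G$ combined with $|G|\le\tau$ gives $d(\rho G)\le\tau$; metrizability then yields $w(\rho G)=d(\rho G)\le\tau$.

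The principal technical step is the identification $\beta_GX=\beta_{\tilde G}X$ under a dense subgroup inclusion, which is the ingredient that glues Theorem~\ref{tA.5} (stated for one group) to the cardinality-reduction argument needed for $(a)\Rightarrow(b)$; the remainder---inheritance of metrizability, the orbit-closure argument for the single-component condition, density-invariance of the $d$-openness set via Lemma~\ref{lA.5}, and the weight inequality $w(\rho G)\le|G|$ for metrizable groups---is routine.
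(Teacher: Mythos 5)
Your argument is correct and takes essentially the same route the paper intends: the paper deduces Corollary~\ref{cA.4} in a single sentence from Theorem~\ref{tA.5}, the fact that a metrizable group of infinite weight $\tau$ has a dense subgroup of cardinality $\tau$, and \cite[Lemma 3]{K4}. Your write-up merely supplies the details left implicit there --- the identification $C^*_G(X)=C^*_{\tilde G}(X)$ (hence $\beta_GX=\beta_{\tilde G}X$) for a dense subgroup, the two applications of Lemma~\ref{lA.5}, and the weight estimate $w(\rho G)=d(\rho G)\leq |G|\leq\tau$ --- all of which check out.
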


\begin{rem}{\rm  According to~\cite{Mg1}, any  action $G\times X\to X$ on a $G$-Tychonoff space can be extended to an action $\rho G\times \tilde X\to \tilde X$,
where $\rho G$ is the  the Raikov completion of the acting  group
$G$  and $\tilde X$ is  the completion of the phase space $X$ with
respect to any equiuniformity. Therefore, in Theorem~\ref{tA.5} and
Corollary~\ref{cA.4} we can use the completion of a phase space $X$
with respect to any equiuniformity and not only the maximal totally
bounded equiuniformity which corresponds just to $\beta_GX$.}
\end{rem}

\begin{que}
Can in Theorem~\ref{tA.5} the group $G$ be taken as a subgroup of
$H(X, {\mathcal U})$ with the topology of uniform convergence for
some uniformity $\mathcal U$  with a countable base on $X$?
\end{que}

\begin{rem}\label{rA.2}
{\rm An alternative proof that a Polish SLH space $X$ is a coset
space of a Polish group in~\cite[Theorem 5]{K4} is the following.

a) By~\cite[Lemma 7, Corollary 2]{K4} there exist a metrizable
compactification $bX$ of $X$ and a countable subgroup $T$ of $\Hom
(bX)$ in the compact open topology such that $X$ is an invariant
subset for the action of $T$ on $bX$  and the action is $d$-open at
every point of $X$.

b)  By~\cite[Lemmas 9, 10]{K4} we can strengthen the topology on $T$
in such a way that it remains metrizable, and the set of points
where the action is $d$-open becomes $X$.

c) The required group is the Riakov completion of $T$ in this
topology.}
\end{rem}


\section{Replacing an action but preserving $d$-openness }

\begin{lem}\label{l3.1}
Let $(G, X, \alpha)$ be a $G$-space with a $d$-open action, and $Y$
be a dense subset of $X$. Then for every $O\in N_G(e)$
$$\lambda_O=\{\int (\cl (Oy))\mid y\in Y\}$$ is a cover of $X$.
\end{lem}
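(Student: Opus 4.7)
The plan is that for fixed $x\in X$ and $O\in N_G(e)$ I will exhibit $y\in Y$ with $x\in\int(\cl(Oy))$. Choose a symmetric $V\in N_G(e)$ with $V\cdot V\subset O$. By $d$-openness of the action at $x$, the set $U:=\int(\cl(Vx))$ is an open neighborhood of $x$; by joint continuity of $\alpha$ at $(e,x)$, refine this to a symmetric $V'\in N_G(e)$ with $V'\subset V$ and an open $W$ such that $x\in W\subset U$ and $V'W\subset U$. By the density of $Y$ in $X$, pick $y\in Y\cap W$.

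The intermediate goal is then to prove the inclusion $W\subset\cl(V^2 y)$; since $V^2\subset O$ and $W$ is an open neighborhood of $x$, this immediately yields $x\in W\subset\int(\cl(Oy))$, as required. For the inclusion, fix $z\in W$ and an arbitrary open neighborhood $M\subset W$ of $z$, and produce $u\in V^2$ with $uy\in M$. Because $Vx$ is dense in $\cl(Vx)\supset M$, there is $v_1\in V$ with $v_1 x\in M$. Joint continuity of $\alpha$ at the fixed point $(v_1,x)$ now produces a symmetric $V''\in N_G(e)$ with $V''\subset V'$ and an open neighborhood $N$ of $x$ for which $v_1 V''N\subset M$. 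Provided a $v_2\in V''$ can be chosen with $v_2 y\in N$, the composition $v_1v_2$ lies in $V\cdot V''\subset V\cdot V'\subset V^2\subset O$ and satisfies $v_1 v_2 y\in v_1 V''N\subset M$, which is the desired element of $V^2 y$ in $M$.

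The principal obstacle is the last selection: the asymmetry between density of $V$-translates of $x$ near $y$ (available from $y\in\cl(Vx)$) and density of $V''$-translates of $y$ near $x$ (what is needed) has to be bridged. The resolution combines the preliminary nesting $V''\subset V'\subset V$ and the relation $V'W\subset U$ with joint continuity of $\alpha$ at $(e,y)$ and $d$-openness at both $x$ and $y$. The key technical point is that the neighborhood $N$ obtained from joint continuity at the \emph{fixed} point $(v_1,x)$ does not depend on $v_2$, so one can leverage the density of $V''y$ near $y$ (from $d$-openness at $y$) — via the proximity of $y$ to $x$ afforded by $y\in W$ and by a further application of joint continuity at $(e,y)$ transferred through $V'W\subset U$ — to produce the required $v_2$ with $v_2 y\in N$. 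The careful distinction between applying joint continuity at fixed points versus at varying points is the technical heart of the argument.
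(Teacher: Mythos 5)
There is a genuine gap, and it sits exactly at the point you yourself flag as ``the principal obstacle.'' The reduction to the inclusion $W\subset\cl (V^2y)$ is a sound idea, and everything up to the selection of $v_2$ is fine. But the final paragraph does not actually produce a $v_2\in V''$ with $v_2y\in N$; it only lists ingredients that ``combine'' to give it. Worse, as literally stated the claim is false: $N$ is an arbitrarily small neighborhood of $x$ and $V''$ an arbitrarily small symmetric element of $N_G(e)$ (both are produced by continuity at $(v_1,x)$ \emph{after} $y$ has been fixed), so demanding $v_2\in V''$ with $v_2y\in N$ for all such choices amounts to $x\in\bigcap_{V''}\cl (V''y)$. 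This already fails for $G=\mathbb R$ acting on itself by translations with $x\ne y$ and $V''=(-\varepsilon,\varepsilon)$, $\varepsilon<|x-y|$. The ``proximity of $y$ to $x$ afforded by $y\in W$'' cannot rescue this, because $W$ (hence $y$) is fixed at the scale of $W$, while $N$ and $V''$ are chosen afterwards and can be much smaller.

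What is true, and what your argument needs, is the weaker statement $x\in\cl (V^2y)$, i.e. $v_2$ can be found in $V^2$ rather than in $V''$. This is where $d$-openness \emph{at $y$} must enter concretely: since $y\in W\subset\int (\cl (Vx))\subset\cl (Vx)$, every neighborhood of $y$ meets $Vx$; applying this to the neighborhood $\int (\cl (Vy))$ of $y$ gives $v\in V$ with $vx\in\cl (Vy)$, whence $x=v^{-1}(vx)\in\cl (v^{-1}Vy)\subset\cl (V^2y)$ by symmetry of $V$. With $v_2\in V^2$ you need $v_1v_2\in V^3$, so you should start from $V^3\subset O$ instead of $V^2\subset O$; then $Vx\subset V\cl (V^2y)\subset\cl (V^3y)$, hence the open set $U=\int (\cl (Vx))$ satisfies $U\subset\cl (Vx)\subset\cl (V^3y)\subset\cl (Oy)$ and $x\in U$ gives $x\in\int (\cl (Oy))$. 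This repaired argument is essentially the paper's proof, which takes $V=V^{-1}$ with $V^3\subset O$, picks $y\in Y\cap\int (\cl (Vx))$, and invokes Lemma~3 of~\cite{CK3}, namely $\st (y,\gamma_V)\subset\int (\cl (Oy))$ --- precisely the reversal step you left unproved.
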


\begin{proof} Let $O\in N_G(e)$ be fixed. Take $V\in N_G(e)$
such that $V^3\subset O$ and $V^{-1}=V$. It is enough to show that
$\gamma_V=\{\int (\cl (Vx))\mid x\in X\}$  refines  $\lambda_O$.

For the set $\int (\cl (Vx))\in\gamma_V$ there exists a point $y\in
Y$ such that $y\in\int (\cl (Vx))$ (since $Y$ is dense in $X$).
Lemma 3 of~\cite{CK3} establishes the inclusion $\st (y,
\gamma_V)\subset\int (\cl (Oy))$. Thus,
$\int(\cl(Vx))\subset\int(\cl(Oy))$ and, hence,
$\gamma_V\succ\lambda_O$, as required.
\end{proof}

\begin{pro}\label{p3.1}
Let $(G, X, \alpha)$ be a $G$-space with a $d$-open action. Then
there exist a family $\mathcal O\subset N_G(e)$ and a dense subset
$Y$ of $X$ such that
\begin{itemize}
\item[(1)] $|\mathcal O|\leq w(X)$, $|Y|\leq w(X)$,
\item[(2)] $\gamma_O=\{\int (\cl (Oy))\mid y\in Y\}$,  $O\in\mathcal O$,  is a cover of $X$,
\item[(3)]  $\{\int (\cl (Oy))\mid y\in Y,\ O\in {\mathcal O}\}$ is a base of the
topology of $X$,
\item[(4)]  $\{\int (\cl (Oy))\mid O\in {\mathcal O}\}$ is a local base at the point $y$ for every $y\in Y$.
\end{itemize}
\end{pro}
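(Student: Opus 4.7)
The plan is to take $Y$ to be any dense subset of $X$ with $|Y|\le w(X)$ (which exists since the density never exceeds the weight), and to build $\mathcal O$ from the ``maximal'' neighborhoods $O_{y,B}=\{g\in G:gy\in B\}$ as $y$ runs over $Y\cap B$ and $B$ over a suitable base $\mathcal B$ of $X$ with $|\mathcal B|\le w(X)$. Each $O_{y,B}$ is an open neighborhood of $e$ because the orbit map $g\mapsto gy$ is continuous, $B$ is open and $ey=y\in B$. Setting $\mathcal O=\{O_{y,B}:B\in\mathcal B,\ y\in Y\cap B\}$ gives $|\mathcal O|\le|\mathcal B|\cdot|Y|\le w(X)$, which handles (1); condition (2) is then immediate from Lemma~\ref{l3.1} applied to each $O\in\mathcal O$.

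The key computational fact is the identity $O_{y,B}\cdot y=B\cap Gy$: the chosen $e$-neighborhood is precisely the one whose orbit-trace on $B$ is the full intersection of $Gy$ with $B$. Because $\alpha$ is $d$-open, $X$ splits into clopen components $C$ of the action and every orbit $Gy$ is dense in its own component. Hence, for $y\in Y\cap B$ lying in a component $C$, the set $B\cap Gy=B\cap C\cap Gy$ is dense in the open set $B\cap C$, so that
\[
B\cap C\ \subset\ \cl(O_{y,B}\cdot y)\ \subset\ \cl(B).
\]
Since $B\cap C$ is open, this upgrades to the sandwich $B\cap C\subset\int(\cl(O_{y,B}\cdot y))\subset\cl(B)$.

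I would additionally require $\mathcal B$ to have the property that for every open $W$ and every $x\in W$ there is some $B\in\mathcal B$ with $x\in B\subset\cl(B)\subset W$; such a base of cardinality $\le w(X)$ exists by the regularity of $X$. For (3), given $x\in W$, pick such a $B$, let $C$ be the component of $x$, and use density of $Y$ in the nonempty open set $B\cap C$ to choose $y\in Y\cap B\cap C$. The sandwich above then gives $x\in\int(\cl(O_{y,B}\cdot y))\subset W$. Condition (4) is similar: given $y\in Y$ and a neighborhood $W$ of $y$, pick $B\in\mathcal B$ with $y\in B\subset\cl(B)\subset W$; then $O_{y,B}\in\mathcal O$ and $\int(\cl(O_{y,B}\cdot y))$ is an open neighborhood of $y$ (by $d$-openness at $y$) contained in $W$. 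Since every $\int(\cl(Oy))$ with $O\in\mathcal O$ is automatically a neighborhood of $y$ by $d$-openness, the family $\{\int(\cl(Oy)):O\in\mathcal O\}$ contains a local base at $y$ and is therefore itself a local base.

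The only delicate point is that orbits are dense solely in their own clopen component, not globally in $X$; the plan accommodates this by insisting that the ``center'' $y$ be chosen in the \emph{same} component as the target point $x$, which is possible because components are open and $Y$ is dense.
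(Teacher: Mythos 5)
Your proof is correct, but it takes a different route from the paper's. The paper's argument is shorter and more direct: since the action is continuous and $d$-open, the family $\{\Int(\cl(Ox))\mid x\in X,\ O\in N_G(e)\}$ is already a base of $X$; one extracts a subfamily $\mathcal B$ of cardinality $w(X)$, lets $Y$ be the set of ``centers'' $x_B$ and $\mathcal O$ the set of corresponding $O_B$'s augmented by a local base $\mathcal O_y\subset N_G(e)$ at each $y\in Y$, so that (3) and (4) hold by construction and only (2) requires Lemma~\ref{l3.1}. You instead fix an arbitrary dense $Y$ and a base $\mathcal B$ in advance and manufacture the neighborhoods $O_{y,B}=\{g\in G\mid gy\in B\}$, verifying (3) and (4) through the identity $O_{y,B}\cdot y=Gy\cap B$ together with the structure theorem for $d$-open actions (clopen components in which every orbit is dense, \cite[Remark 2]{CK3}); your handling of the component issue --- choosing the center $y$ in the same component as the target point $x$ --- is exactly the right fix, and the sandwich $B\cap C\subset\Int(\cl(O_{y,B}\cdot y))\subset\cl(B)$ is sound. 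The trade-off: the paper's proof needs only continuity plus pointwise $d$-openness, while yours leans on the global orbit structure; in exchange you get canonical, explicitly described members of $\mathcal O$ and the freedom to prescribe the dense set $Y$ beforehand, which the paper's construction does not offer. Both arguments yield the same cardinality bounds in (1).
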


\begin{proof}
Since the action is continuous, the family $\{\int (\cl (Ox))\mid
x\in X,\ O\in N_G(e)\}$ is a base for $X$. There exists its
subfamily $\mathcal B$ of cardinality $w(X)$ which is a base of $X$.
Each element $B\in\mathcal B$ is of the form $\int (\cl (O_Bx_B))$
where $x_B\in X$, $O_B\in N_G(e)$.

Put $Y=\{ x_B \mid  B\in \mathcal B\}$. Evidently, $Y$ is a dense subset
of $X$ and $|Y|\leq w(X)$. For every $y\in Y$ let ${\mathcal
O}_y\subset N_G(e)$ be such that $|{\mathcal O}_y|\leq w(X)$ and
$\{\int (\cl (Oy))\mid O\in {\mathcal O}_y\}$ is a local  base at the point $y$.

Put $\mathcal O=\{ O_B\mid B\in \mathcal B\}\cup\bigcup\{{\mathcal
O}_y\mid y\in Y\}$.  Evidently, $|\mathcal O|\leq w(X)$. Conditions
(3) and (4) follow from the definitions of $Y$ and $\mathcal O$, and
condition (2) is a consequence of Lemma~\ref{l3.1}.
\end{proof}

\begin{rem}\label{r3.1}
{\rm In Proposition~\ref{p3.1} one can assume that the family
${\mathcal O}$ is a filter basis of the same cardinality such that
\begin{itemize}
\item[{\rm (a)}] all the sets  $O\in \mathcal O$ are symmetric, i.e.
$O=O^{-1}$, and
\item[{\rm (b)}] for every  $O\in\mathcal O$ there is $V\in\mathcal O$
such that $V^2\subset O$.
\end{itemize}
Therefore, $H=\bigcap\{O\mid O\in {\mathcal O}\}$ is a subgroup of
$G$ and, since $Hy=y$ for any $y\in Y$, and $Y$ is dense in $X$, it
follows that $H$ belongs to the kernel of the action. Moreover, $\psi
(H, G)\leq w(X)$, where $\psi (H, G)$ is a pseudocharacter of $H$ in
$G$.}
\end{rem}

The following conditions are sufficient for an action to be $d$-open.

\begin{pro}\label{p3.2}
Let $(G, X, \alpha)$ be a $G$-space and $Y$ be a dense subset of $X$
such that:
\begin{itemize}
\item[{\rm (1)}] the action is $d$-open at points of $Y$;
\item[{\rm (2)}] for any $O\in N_G(e)$ the family $\{\int (\cl
(Oy))\mid y\in Y\}$ is a cover of $X$;
\item[{\rm (3)}] for any $y\in Y$ and $O, U, V\in N_G(e)$ with  $U^{-1}=U$, $U^2\subset O$, one has
$$\cl (Uy)\subset O\int (\cl (Vy))=\bigcup\{g\int (\cl (Vy))\mid g\in O\}.$$
\end{itemize}
Then the action on $X$ is $d$-open.
\end{pro}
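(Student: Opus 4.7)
The goal is to show that every $x_0 \in X$ and every $\Omega \in N_G(e)$ satisfy $x_0 \in \int(\cl(\Omega x_0))$. My plan is to exhibit a specific open neighborhood $W_0$ of $x_0$ coming from condition (2) and to prove $W_0 \subset \cl(\Omega x_0)$; since $W_0$ is open, this immediately gives $x_0 \in W_0 \subset \int(\cl(\Omega x_0))$.

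First I would choose symmetric $O, U \in N_G(e)$ with $O^2 \subset \Omega$ and $U^2 \subset O$; since $e \in U$ this forces $U \subset O$. By condition (2) I pick $y \in Y$ with $x_0 \in W_0 := \int(\cl(Uy))$, the candidate open nbd of $x_0$.

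The decisive intermediate claim is that $y \in \cl(Ox_0)$. To establish it, let $N_y$ be an arbitrary nbd of $y$. Using regularity of the Tychonoff space $X$ together with continuity of the orbit map $\alpha(\cdot, y) : G \to X$ at $e$, I can find $V \in N_G(e)$ with $\cl(Vy) \subset N_y$. Condition (3), applied to the quadruple $(y, O, U, V)$, yields $\cl(Uy) \subset O \int(\cl(Vy))$, so since $x_0 \in \cl(Uy)$ there exists $g \in O$ with $g^{-1} x_0 \in \int(\cl(Vy)) \subset N_y$. By symmetry $g^{-1} \in O$, so $N_y \cap Ox_0 \neq \emptyset$; as $N_y$ was arbitrary, $y \in \cl(Ox_0)$.

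It then remains to show $W_0 \subset \cl(\Omega x_0)$. Let $z \in W_0 \subset \cl(Uy)$ and let $W_z$ be any nbd of $z$. Then $W_z \cap Uy \neq \emptyset$, so there is $u \in U$ with $uy \in W_z$; the set $u^{-1} W_z$ is a nbd of $y$, and the previous step produces $g \in O$ with $g x_0 \in u^{-1} W_z$, whence $ug x_0 \in W_z$. Since $ug \in UO \subset O^2 \subset \Omega$, we get $ug x_0 \in W_z \cap \Omega x_0$, so $z \in \cl(\Omega x_0)$. The main obstacle is identifying the right intermediate claim: condition (3) is useful not for one fixed $V$ but by letting $V$ shrink through $N_G(e)$, which is what pushes $g^{-1} x_0$ arbitrarily close to $y$ and places $y$ in $\cl(Ox_0)$; once this is noticed, the remaining work is routine bookkeeping with symmetric nbds and powers of $O$.
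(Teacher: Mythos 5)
Your proof is correct and follows essentially the same route as the paper's: use condition (2) to place $x_0$ in some $\int(\cl(Uy))$, then use condition (3) with shrinking $V$ (plus regularity and translation by elements of $U$) to show that the orbit $\Omega x_0$ is dense in that open set. Your explicit intermediate claim $y\in\cl(Ox_0)$ and the slightly different bookkeeping of powers ($U^2\subset O$, $O^2\subset\Omega$ versus the paper's $U^4\subset O$) are only cosmetic repackagings of the paper's statement that $Ox\cap\int(\cl(Uy))$ is dense in $\int(\cl(Uy))$.
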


\begin{proof}
For arbitrary $O\in N_G(e)$, $O^{-1}=O$, take $U\in N_G(e)$ such
that $U^{-1}=U$, $U^4\subset O$.  By condition (2), for any $x\in X$ there is $y\in Y$
such that $x\in\int (\cl (Uy))$. For any $V\in
N_G(e)$, from condition (3), we have  $x\in\int (\cl (Uy))\subset
U^2\int (\cl (Vy))$. Hence, there is $g\in U^2$ such that $gx\in
\int (\cl (Vy))$ and $Ox\cap\int (\cl (Uy))$ is dense in $\int (\cl
(Uy))$. Therefore, $x\in\int (\cl (Uy))\subset\cl (Ox)$, showing that the
action is $d$-open at $x$.
\end{proof}

\begin{lem}\label{l3.2}
Let $G$ be a group of cardinality $\tau$, and ${\mathcal O}\subset
N_G(e)$ be a  family of cardinality $\tau$. Then there exists a
filter in $N_G(e)$ containing ${\mathcal O}$, with basis ${\mathcal
B}$ of cardinality $\tau$, satisfying the conditions {\rm (a)} and
{\rm (b)} of \ {\rm Remark~\ref{r3.1}} and the following condition:
\begin{itemize}
\item[{\rm (c)}] for any $O\in\mathcal B$ and arbitrary $g\in G$ there is $V\in\mathcal B$
such that $gVg^{-1}\subset O$.
\end{itemize}
\end{lem}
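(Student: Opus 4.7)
The plan is a straightforward countable closure argument. The family $\mathcal{O}$ already sits inside $N_G(e)$, and I would enlarge it step by step to secure the three closure properties (a), (b), (c) together with closure under finite intersections, while never blowing up past cardinality $\tau$. Throughout the construction I shall work only with symmetric neighborhoods, so that (a) is maintained automatically.

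First, set $\mathcal{B}_0=\{O\cap O^{-1}\mid O\in\mathcal{O}\}$; every element is symmetric, $|\mathcal{B}_0|\leq\tau$, and each $O\in\mathcal{O}$ contains a member of $\mathcal{B}_0$. Inductively, suppose a family $\mathcal{B}_n$ of symmetric neighborhoods of $e$ with $|\mathcal{B}_n|\leq\tau$ has been constructed. By continuity of multiplication at $(e,e)$, for each $O\in\mathcal{B}_n$ pick $W_O\in N_G(e)$ with $W_O^2\subset O$; by continuity of conjugation by $g$, for each pair $(O,g)\in\mathcal{B}_n\times G$ pick $W_{O,g}\in N_G(e)$ with $gW_{O,g}g^{-1}\subset O$. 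Let $\mathcal{B}_{n+1}$ consist of $\mathcal{B}_n$ together with the symmetrizations $W_O\cap W_O^{-1}$ and $W_{O,g}\cap W_{O,g}^{-1}$, and all finite intersections of members of the resulting collection. Since intersections of symmetric sets are symmetric, $\mathcal{B}_{n+1}$ consists entirely of symmetric sets; and $|\mathcal{B}_{n+1}|\leq|\mathcal{B}_n|\cdot|G|\leq\tau\cdot\tau=\tau$, using that $\tau$ is infinite.

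Then put $\mathcal{B}=\bigcup_n\mathcal{B}_n$, so that $|\mathcal{B}|\leq\aleph_0\cdot\tau=\tau$ and (a) holds by construction. For (b), given $O\in\mathcal{B}$ fix $n$ with $O\in\mathcal{B}_n$ and take $V=W_O\cap W_O^{-1}\in\mathcal{B}_{n+1}\subset\mathcal{B}$; then $V^2\subset W_O^2\subset O$. For (c), given $O\in\mathcal{B}$ and $g\in G$, take $V=W_{O,g}\cap W_{O,g}^{-1}\in\mathcal{B}$ and observe $gVg^{-1}\subset gW_{O,g}g^{-1}\subset O$. Any two elements of $\mathcal{B}$ lie in a common $\mathcal{B}_n$, hence their intersection lies in $\mathcal{B}_{n+1}\subset\mathcal{B}$, so $\mathcal{B}$ is a filter basis; and since every $O\in\mathcal{O}$ contains $O\cap O^{-1}\in\mathcal{B}_0$, the filter generated by $\mathcal{B}$ contains $\mathcal{O}$.

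There is no substantive obstacle here. The only bookkeeping subtlety is that every neighborhood produced by continuity to witness (b) or (c) must be re-symmetrized by intersecting with its inverse, and one needs to verify that this does not spoil the required inclusion—which it does not, because $V\subset W$ forces $V^2\subset W^2$ and $gVg^{-1}\subset gWg^{-1}$. The cardinal arithmetic is immediate once $\tau$ is infinite, which is the only case of interest.
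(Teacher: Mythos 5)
Your proof is correct and follows essentially the same route as the paper: an $\omega$-step closure argument, taking symmetrizations and square-root witnesses at each stage, closing under finite intersections, and passing to the union $\mathcal B=\bigcup_n\mathcal B_n$. The only cosmetic difference is that the paper secures condition (c) by adjoining all conjugates $gOg^{-1}$ and re-running the symmetrization procedure, whereas you invoke continuity of conjugation to pick witnesses $W_{O,g}$ directly; both work for the same reason.
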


\begin{proof}
Starting with ${\mathcal O}_1={\mathcal O}$ we can

1) assume that the sets $O\in {\mathcal O}_1$ are symmetric (take
the sets $O\cap O^{-1}$ instead of $O$).

Then,

2) for any $O\in {\mathcal O}$ take $O_n=O_n^{-1}$ ($O_1=O$),
$O_{n+1}^2\subset O_n$, $n\in\mathbb N$.

And at last,

3) let ${\mathcal O}'_1$ be the family of finite intersections of
sets $\{O_n\mid O\in {\mathcal O}_1, n\in\mathbb N\}$.

The obtained family ${\mathcal O}'_1$ is a filter basis of
cardinality $\tau$, satisfying the conditions {\rm (a)} and {\rm
(b)} of Remark~{\rm\ref{r3.1}}.

Put

4) ${\mathcal O}_2=\{gOg^{-1}\mid O\in {\mathcal O}'_1, g\in G\}$.

Obviously $|{\mathcal O}_2|=\tau$.

Applying the same procedure to ${\mathcal O}_2$ we obtain the family
${\mathcal O}_3$ of cardinality $\tau$ and so on. The family
$\bigcup\{{\mathcal O}_n\mid n\in\mathbb N\}$ is the required filter
basis ${\mathcal B}\subset N_G(e)$ of cardinality $\tau$.
\end{proof}

\begin{thm}\label{t3.1}
Let $(G, X, \alpha)$ be a $G$-space with a $d$-open action. Then
there exist a group $H$ and a continuous $d$-open action $\gamma:
H\times X\to X$ such that $|H|\leq w(X)$ and  $w(H)\leq w(X)$.
\end{thm}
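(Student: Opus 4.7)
The plan is to build $H$ as a subgroup of $G$ of cardinality at most $w(X)$, equipped with a group topology $\tau$—generally strictly coarser than the subspace topology from $G$—of weight at most $w(X)$, in such a way that the restricted action $(H,\tau)\times X\to X$ is continuous and $d$-open. The two main ingredients will be Proposition~\ref{p3.1} with Remark~\ref{r3.1}, which supplies a filter base $\mathcal O\subset N_G(e)$ of size $\leq w(X)$ and a dense set $Y\subset X$ of size $\leq w(X)$ with the property that $\{\int\cl(Oy):O\in\mathcal O,\ y\in Y\}$ is a base for $X$ and $\{\int\cl(Oy):O\in\mathcal O\}$ is a local base at each $y\in Y$, and Lemma~\ref{l3.2}, whose construction closes such a family under conjugation by a prescribed small subgroup at no cost to cardinality.

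I would first produce $H$ and an enlargement of $\mathcal O$ by a countable induction. At each stage, for every pair $(O,y)\in\mathcal O\times Y$ I would adjoin to $H$ at most $w(X)$ elements of $O$ whose action on $y$ yields a dense subset of $Oy$ in $X$ (possible since $w(Oy)\leq w(X)$); and I would apply the proof of Lemma~\ref{l3.2}, with conjugation restricted to the current (small) value of $H$, in order to close $\mathcal O$ under conjugation by $H$ while retaining the filter-base, symmetry, and $V^2$ properties. Passing to countable unions produces $H\leq G$ and $\mathcal O\subset N_G(e)$, both of cardinality $\leq w(X)$, with $\mathcal O$ closed under conjugation by elements of $H$ and with $(O\cap H)y$ dense in $Oy$ for every $O\in\mathcal O$ and $y\in Y$.

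Having $H$ and $\mathcal O$ in hand, I would declare $\{O\cap H:O\in\mathcal O\}$ to be a local base at $e$ for a topology $\tau$ on $H$. The closure properties of $\mathcal O$ are exactly the neighbourhood axioms that make $(H,\tau)$ a topological group, and the left translates $\{h(O\cap H):h\in H,\ O\in\mathcal O\}$ form a base of cardinality $\leq|H|\cdot|\mathcal O|\leq w(X)$, giving $w(H,\tau)\leq w(X)$. The $d$-openness of the restricted action would then be obtained by verifying conditions (1)--(3) of Proposition~\ref{p3.2} with the same $Y$: condition (1) at $y\in Y$ follows from $d$-openness of $\alpha$ at $y$ combined with density of $(O\cap H)y$ in $Oy$; (2) is Proposition~\ref{p3.1}(2) together with the same density; and (3) is a direct verification using $d$-openness of the original action.

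The hard part will be continuity of the action in the new, coarser topology $\tau$, since $\mathcal O$ has been chosen small at the level of its images $\int\cl(Oy)$ in $X$ rather than as a local base of $e$ in $G$, so joint continuity of $\alpha$ is not directly available. The key auxiliary fact I would prove is that $d$-openness forces $Oy\subset\int\cl(Oy)$ for every open $O\in N_G(e)$ and every $y\in X$: given $g\in O$, pick $N\in N_G(e)$ with $Ng\subset O$; then $Ngy\subset Oy$, and $d$-openness at $gy$ yields $gy\in\int\cl(Ngy)\subset\int\cl(Oy)$. In particular $\cl(Oy)=\cl(\int\cl(Oy))$. Combining this with Proposition~\ref{p3.1}(3)(4), regularity of $X$, and the $V^2$-property of $\mathcal O$, one can arrange, for each open $W\ni x$, a choice of $y\in Y$ close to $x$ and of $O,O'\in\mathcal O$ with $O^2\subset O'$, $\cl(O'y)\subset W$, and $x\in U:=\int\cl(Oy)$, such that $(O\cap H)U\subset W$; this gives continuity at $(e,x)$ and hence, by translation invariance, continuity of $\gamma$ everywhere.
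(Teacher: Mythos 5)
Your overall strategy is the same as the paper's: take $\mathcal O$ and $Y$ from Proposition~\ref{p3.1} and Remark~\ref{r3.1}, alternately enlarge a small subgroup $H$ of $G$ and close the filter base under conjugation by $H$ via (the proof of) Lemma~\ref{l3.2}, topologize $H$ by the traces $O\cap H$, and then verify $d$-openness through Proposition~\ref{p3.2}. Your auxiliary observation that $d$-openness gives $Oy\subset\Int(\cl(Oy))$, hence $\cl(Oy)=\cl(\Int(\cl(Oy)))$, is correct and is a reasonable way to organize the continuity argument (which the paper itself leaves essentially unargued).

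There is, however, one genuine gap: your claim that condition (3) of Proposition~\ref{p3.2} is ``a direct verification using $d$-openness of the original action.'' Condition (3) must hold for the \emph{new} group: one needs $\cl(Uy)\subset (O\cap H)\,\Int(\cl(Vy))$, i.e.\ the covering translates $g\,\Int(\cl(Vy))$ must be taken with $g$ ranging over $O\cap H$. What $d$-openness of $\alpha$ gives (via \cite[Lemma 4]{CK3}) is $\cl(Oy)\subset O^2\,\Int(\cl(Vy))$ with the covering elements taken from $O^2\subset G$, and there is no mechanism to replace an arbitrary such $g\in G$ by an element of your small subgroup $H$: the only elements you have seeded into $H$ are the ``density witnesses'' whose orbits of $y$ are dense in $Oy$, and density of $(O\cap H)y$ in $Oy$ does not imply that the $H$-translates of $\Int(\cl(Vy))$ still cover $\cl(Oy)$ (knowing $hy$ is close to a point $z\in\cl(Oy)$ does not give $h^{-1}z$ close to $y$ without an equicontinuity assumption you do not have). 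The paper's proof handles this by explicitly choosing, for every $O,V\in\mathcal O$ and $y\in Y$, a set $H(O,V,y)\subset O^2$ of cardinality at most $w(X)$ with $\cl(Oy)\subset H(O,V,y)\,\Int(\cl(Vy))$, and adjoining all these sets to the generating set of $H$; this is exactly the ingredient your construction is missing, and it is what makes condition (3) checkable. A second, minor, point: with your definition the topology $\tau$ on $H$ need not be Hausdorff, since $T=\bigcap\{O\cap H\mid O\in\mathcal O\}$ may be a nontrivial subgroup; as $T$ lies in the kernel of the action this is repaired, as in the paper, by passing to the quotient $H/T$ with the induced topology.
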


\begin{proof}
Let the family ${\mathcal O}\subset N_G(e)$ and the dense subset $Y$
of $X$ be taken as in Proposition~\ref{p3.1}. We also may assume
that the family ${\mathcal O}$ is a filter basis satisfying the
conditions (a) and (b) of Remark~\ref{r3.1}. The following procedure
will allow to construct a group of cardinality $w(X)$ and a filter
basis of subsets of $N_G(e)$ of cardinality $w(X)$ which satisfy
the conditions (a), (b) of Remark~{\rm\ref{r3.1}} and  the condition (c) of
Lemma~\ref{l3.2}.

Since $\alpha$ is a $d$-open action, for $y\in Y$ and $O\in\mathcal
O$, we can choose
 a set $G(O,
y)\subset O$ of cardinality $w(X)$ such that $G(O, y)y$ is dense in
$\int (\cl (Oy))$.

\noindent By~\cite[Lemma 4]{CK3} for any pair $O, V\in\mathcal O$
and $x\in X$ we have $$\cl (Ox)\subset O^2\int (\cl (Vx)).$$ There
is a subset $H(O, V, x)\subset O^2$ such that $|H(O, V, x)|\leq
w(X)$ and $\cl (Ox)\subset H(O, V, x)\int (\cl (Vx))$.

Let $H_1$ be the subgroup of $G$ generated by the subset
$$\bigcup\{
H(O, V, y)\mid O, V\in {\mathcal O},\ y\in Y\}\cup\bigcup\{G(O,
y)\mid O\in {\mathcal O},\ y\in Y\}.$$
 Evidently, $|H_1|\leq w(X).$

By Lemma~\ref{l3.2}, for the group $H_1$ and the family ${\mathcal
O}\subset N_G(e)$ there exists a filter in $N_G(e)$ containing
${\mathcal O}$, with basis ${\mathcal B}_1$ of cardinality $w(X)$,
satisfying the conditions (a), (b) of Remark~{\rm\ref{r3.1}} and the condition (c)
of Lemma~\ref{l3.2}.

Applying the same procedure to the family ${\mathcal B}_1\subset
N_G(e)$ we obtain a  subgroup $H_2$, $|H_2|\leq w(X)$, and a filter
in $N_G(e)$ containing ${\mathcal O}$, with basis ${\mathcal B}_2$
of cardinality $w(X)$, satisfying the conditions (a), (b) of
Remark~{\rm\ref{r3.1}} and the condition (c) of Lemma~\ref{l3.2},  and so on. The
subgroup $H'=\bigcup\{H_n\mid n\in\mathbb N\}$ is of cardinality
$w(X)$ and the family ${\mathcal B}=\bigcup\{{\mathcal B}_n\mid
n\in\mathbb N\}$ is the filter basis of cardinality $\tau$
satisfying the conditions (a), (b) of Remark~{\rm\ref{r3.1}} and the condition (c)
of Lemma~\ref{l3.2}.

Let $H$ be the quotient group $H'/T$, where $T=(\bigcap\{O\mid O\in
{\mathcal B\}})\cap H'$ is a normal subgroup of $H'$ and $q: H'\to
H'/T$ be the quotient homomorphism. Following to~\cite[Proposition
1.21]{RD}, we shall consider the topology on $H$ for which the local
basis at the  unity (not necessarily of open sets) is $\{q(O\cap
H')\mid O\in {\mathcal B}\}$. Evidently $|H|\leq w(X)$ and $w(H)\leq
w(X)$.

Since $H'$ is a subgroup of $G$ and $T$ belongs to the kernel of the
action $\alpha$, the action $\gamma: H\times X\to X$ given  by  $\gamma (h,
x)=\alpha (h', x)$, $h'\in q^{-1}(h)$, is well defined. It is
continuous due to the choice of $\mathcal B$ (${\mathcal
O}\subset\mathcal B$) and Proposition~\ref{p3.1}. In order to prove
$d$-openness of $\gamma$ we shall check the fulfillment of
conditions in Proposition~\ref{p3.2}.

The fulfillment of (1). Take $q(O\cap H')$, $\int (q(O\cap H'))\in
N_H(e)$, $O\in {\mathcal B}$, and $y\in Y$. Since the set $G(O, y)y$
is dense in $\int (\cl (Oy))$, and $G(O, y)\subset H'$ we have
$y\in\int (\cl (Oy))\subset\int (\cl (q(O\cap H')y))$.

The fulfillment of (2) is due to Lemma~\ref{l3.1} and the condition
$G(O, y)\subset O$, $\cl (G(O, y)y)\supset\int (\cl (Oy))$, $y\in
Y$, $O\in\mathcal B$.

The fulfillment of (3) follows from the inclusions $\cl (Ox)\subset H(O,
V, x)\int (\cl (Vx))$ and $H(O, V, y)\subset H'$.
\end{proof}

\begin{que}
Do the maximal equiuniformities on $X$ with respect of the actions $\alpha$ and
$\gamma$  coincide ?
\end{que}

\begin{rem}\label{remadd1}
{\rm If in the proof of Theorem~\ref{t3.1} we suppose that the
family ${\mathcal O}$ contains as an element the whole group $G$,
then the components of actions $\alpha$ and $\gamma$ coincide.}
\end{rem}

\begin{rem}\label{remadd}
{\rm If in Theorem~\ref{t3.1}, $\chi (G)\leq w(X)$,
then in the first step of the proof we can take $\mathcal O$,
a local base at $e$,   with
$|{\mathcal O}|\leq  w(X)$.  Then we obtain a
subgroup $H=H_1$ of $G$ with  $w(H)\leq w(X)$, $|H|\leq w(X)$ which acts
$d$-openly on $X$.

In particular, if the group $G$ is metrizable and the phase space
$X$ is separable metrizable then we obtain a metrizable countable
subgroup $H$ of $G$ which acts $d$-openly on $X$.}
\end{rem}

\begin{thm}\label{thmadd}
Let $X$ be a coset space of a complete metrizable group $G$. Then
$X$ is a coset space of a complete metrizable subgroup $H$ of $G$
with $w(H)\leq w(X)$.
\end{thm}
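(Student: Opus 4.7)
I would construct the desired subgroup $H$ as the closure in $G$ of a small dense subgroup $H_0\leq G$ supplied by Theorem~\ref{t3.1} in its refined form, and then verify that $H$ is complete metrizable and still acts transitively on $X$. Writing $X=G/K$ for a closed subgroup $K\leq G$, the natural translation action $\alpha\colon G\times X\to X$ is continuous, open and transitive; in particular it is $d$-open with the single component $X$.

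Since $G$ is metrizable, $\chi(G)\leq\aleph_0\leq w(X)$, so Remark~\ref{remadd} applies. I would carry out the construction of Theorem~\ref{t3.1} starting from a countable local base at $e$ in $G$ enlarged so as to contain the whole group $G$ (as in Remark~\ref{remadd1}). This produces a genuine subgroup $H_0\leq G$ (no quotient step is needed under $\chi(G)\leq w(X)$) with $|H_0|,\,w(H_0)\leq w(X)$ whose restricted action on $X$ is $d$-open and whose components of action coincide with those of $\alpha$; hence $H_0$ acts with the unique component $X$.

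Now set $H=\operatorname{cl}_G H_0$. As a closed subgroup of a complete metrizable group, $H$ is itself complete metrizable, and since weight equals density in metrizable spaces,
\[
w(H)=d(H)\leq |H_0|\leq w(X).
\]
The $H$-action on $X$ is the restriction of $\alpha$, hence continuous. Because $H_0$ is dense in $H$ and acts $d$-openly on $X$ at every point, Lemma~\ref{lA.5} gives that the $H$-action on $X$ is also $d$-open; and since $H\supset H_0$, the $H$-orbits contain the $H_0$-orbits, so the single-component property persists.

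To finish, every complete metrizable group is \v{C}ech-complete, so the cited result \cite[Theorem~3]{K4} upgrades the $d$-open $H$-action on $X$ to an open action. An open action with a single component is automatically transitive: each orbit $Hx$ is open, hence (as orbits partition $X$) clopen, while the single-component condition forces $\overline{Hx}=X$, and therefore $Hx=X$. Consequently $X$ is a coset space of $H$, completing the proof. \textbf{Main obstacle.} The decisive subtlety is that in full generality Theorem~\ref{t3.1} yields an abstract group $H$ (a quotient of a subgroup of $G$) carrying a topology that need not agree with the subgroup topology inherited from $G$, which would prevent taking the closure in $G$. Remarks~\ref{remadd} and~\ref{remadd1} together are precisely what is needed to keep $H_0$ inside $G$ with the subgroup topology \emph{and} to transfer the one-component property from $\alpha$; once this preparation is in place, the closure/\v{C}ech-completeness/openness chain runs straightforwardly.
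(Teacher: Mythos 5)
Your proposal is correct and follows essentially the same route as the paper: invoke Remark~\ref{remadd} to obtain a small subgroup of $G$ acting $d$-openly, pass to its closure in $G$ (the paper phrases this as the Raikov completion $\rho H'$, which coincides with the closure), and use \cite[Theorem 3]{K4} to upgrade $d$-openness to openness and hence transitivity. Your additional care about the one-component property and the use of Lemma~\ref{lA.5} only makes explicit steps the paper leaves implicit.
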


\begin{proof}  By Remark~\ref{remadd}, there is a metrizable subgroup $H'\subset G$ with  $|H'|\leq w(X)$ which acts $d$-openly on $X$.  Its Raikov
completion $H=\rho H'$ is the closure of $H'$ in $G$ and is a
complete metrizable group. Thus, the action $H\times X\to X$ is well
defined and $d$-open. Moreover, from~\cite[Theorem 3]{K4} it follows
that it is open, and hence, $X$ is a coset space of the  complete
metrizable group $H$ with  $w(H)\leq w(X)$.
\end{proof}

\begin{cor}\label{coradd}
Let a Polish space $X$ be a coset space of a complete metrizable
group $G$. Then $X$ is a coset space of a Polish subgroup $H$ of
$G$.
\end{cor}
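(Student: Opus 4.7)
The plan is to deduce this as an immediate specialization of Theorem~\ref{thmadd}. Recall that a Polish space is, by definition, a separable completely metrizable space, so if $X$ is Polish then $w(X) \leq \aleph_0$ and $X$ carries a compatible complete metric.

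Applying Theorem~\ref{thmadd} to the coset space $X$ under the complete metrizable group $G$, I obtain a complete metrizable subgroup $H \subset G$ such that $X$ is a coset space of $H$ and $w(H) \leq w(X)$. Since $w(X) \leq \aleph_0$, this gives $w(H) \leq \aleph_0$, so $H$ is a separable metrizable group. Combined with complete metrizability, $H$ is Polish, which is exactly what the corollary asserts.

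The only subtlety worth flagging is checking that the conclusion of Theorem~\ref{thmadd} delivers both ingredients of ``Polish''. Separability comes from $w(H) \leq \aleph_0$ (since any second countable space is separable), and completeness comes from the fact that in the construction $H$ is obtained as the Raikov completion $\rho H'$ of a metrizable subgroup $H' \subseteq G$, and the Raikov completion of a metrizable group is complete metrizable. No additional work is needed; essentially the corollary is Theorem~\ref{thmadd} specialized to countable weight, so I do not anticipate a genuine obstacle.
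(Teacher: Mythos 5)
Your proposal is correct and is exactly the argument the paper intends: Corollary~\ref{coradd} is stated as an immediate specialization of Theorem~\ref{thmadd}, obtained by noting that $w(X)\leq\aleph_0$ for a Polish $X$ forces $w(H)\leq\aleph_0$, so the complete metrizable subgroup $H$ is also separable, hence Polish. No discrepancy with the paper's (implicit) proof.
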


\begin{que}
Let $X$ be a coset space of a metrizable group $G$. Can it be a
coset space of a metrizable group $H$ with $w(H)\leq w(X)$?
\end{que}

Since ${\rm ib} (G)\leq w (G)$ for a topological group $G$, from
Theorems~\ref{tA.3} and~\ref{t3.1} we get the following result.

\begin{thm}\label{t3.2} For a $G$-space $(G, X, \alpha)$ with a
$d$-open action,  there exist a group $H$  and  an equivariant
compactification  $(H, bX, \tilde\gamma)$  of $(H, X, \gamma)$ such
that:
\begin{enumerate}
\item
 $w(bX)=w(X)$,
\item
$|H|\leq w(X)$ and $w(H)\leq w(X)$,
\item
$\gamma$ is a $d$-open action,
\item components of actions $\alpha$ and $\gamma$ coincide.
\end{enumerate}
\end{thm}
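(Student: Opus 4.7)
The plan is to combine Theorem~\ref{t3.1} with the equivariant compactification Theorem~\ref{tA.3}. First I would apply Theorem~\ref{t3.1} to the $G$-space $(G, X, \alpha)$ to produce a group $H$ with $|H| \leq w(X)$ and $w(H) \leq w(X)$ together with a continuous $d$-open action $\gamma\colon H \times X \to X$. This immediately delivers conditions (2) and (3). To also secure condition (4), I would run the construction of Theorem~\ref{t3.1} starting with a family $\mathcal O$ that explicitly contains $G$ itself as a member; by Remark~\ref{remadd1}, the components of the actions $\alpha$ and $\gamma$ then coincide.

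Next I would apply Theorem~\ref{tA.3} to the $G$-space $(H, X, \gamma)$ to produce an equivariant compactification $(\id, f)\colon (H, X, \gamma) \to (H, bX, \tilde\gamma)$ satisfying $w(bX) \leq ib(H) \cdot w(X)$. Using the inequality $ib(H) \leq w(H) \leq w(X)$ recorded just before the statement of the theorem, this gives $w(bX) \leq w(X)$; the reverse inequality $w(X) \leq w(bX)$ is immediate from the density of $X$ in $bX$. This yields condition (1), completing the proof.

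The main technical point I expect is verifying that $(H, X, \gamma)$ is $G$-Tychonoff, which is required for Theorem~\ref{tA.3} to apply. Assuming the original $(G, X, \alpha)$ is $G$-Tychonoff (tacit throughout the paper whenever equivariant compactifications are invoked), one checks that every $f \in C^*_G(X)$ is also $H$-uniform: for any $\varepsilon > 0$, the $G$-uniformity of $f$ yields some $O \in N_G(e)$ controlling its oscillation, and arranging the filter basis $\mathcal B$ from the proof of Theorem~\ref{t3.1} to refine $O$ gives a set $q(O' \cap H') \in N_H(e)$ realizing the same bound. Hence $C^*_G(X) \subseteq C^*_H(X)$, so the latter separates points and closed sets and $(H, X, \gamma)$ is $G$-Tychonoff. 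Alternatively, the $G$-Tychonoff property for $(H, X, \gamma)$ follows directly from the natural equiuniformity induced by the $d$-openness of the $H$-action, cf.\ the references cited at the end of Section~\ref{Prelim}.
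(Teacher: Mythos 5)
Your proposal is correct and follows essentially the same route as the paper, which derives Theorem~\ref{t3.2} exactly by combining Theorem~\ref{t3.1} with Theorem~\ref{tA.3} via the inequality $ib(H)\leq w(H)\leq w(X)$, and obtains (4) from Remark~\ref{remadd1}. The only step the paper leaves tacit is the $G$-Tychonoff hypothesis for $(H,X,\gamma)$, and your second justification (the natural totally bounded equiuniformity induced by a $d$-open action) is the right one to rely on, since the first argument would require refining uncountably many neighborhoods into the filter basis $\mathcal B$.
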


\begin{rem}
{\rm The group $H$ in Theorem~\ref{t3.2} can be considered as a
subgroup of $\Hom(bX)$ with the compact open topology.

In~\cite{Ar} it is proved that the compact open topology is the
least admissible topology on the group $\Hom(bX)$. In~\cite{SM} it
is noted that the compact open topology is the least admissible
topology on any subgroup $H$ of $\Hom(bX)$. In fact, let $K$ be a
compact and $W$ an open subset of $bX$, $[K, W]=\{h\in H \mid
h(K)\subset W\}$ is an element of a sub-base of the compact open
topology. Fix $h\in [K, W]$. For any $x\in K$ there are a nbd $Ox$
of $x$ and a nbd $V_xh$ of $h$ (in the original topology) such that
$\gamma (V_xh, Ox)\subset W$. Take a finite set of nbds $Ox_1,\dots,
Ox_n$ such that $K\subset O=Ox_1\cup\dots\cup Ox_n$, and denote
$V:=V_{x_1}\cap\dots\cap V_{x_n}$. Then $\gamma (Vh, O)\subset W$.
This shows that the set $[K, W]$ is open in $H$ (in the original
topology).

Since the restriction $\gamma|_{H\times X}: H\times X\to X$ is
$d$-open, it will be, evidently, $d$-open when $H$ is equipped with a
weaker topology.}
\end{rem}

\begin{rem}\label{r3.2}
{\rm From the proof of Theorem~\ref{t3.1} it is easy to see that any
subgroup (not topological) of $G$ of cardinality $w(X)$ can be a
subgroup of the group $H$ in Theorems~\ref{t3.1} and
Theorem~\ref{t3.2}.}
\end{rem}

\begin{que}
Can Theorem~\ref{t3.2}  be strengthened in such a way that the action $\gamma: H\times bX\to bX$  is  $d$-open?
\end{que}

Is it possible to be strengthened in the following way.

\begin{que} Can Theorem~\ref{t3.2}  be strengthened in such a way that the action $\gamma: H\times X\to X$  is  $d$-open and  its
extension to the completion of $X$ with respect to the maximal equiuniformity is
$d$-open too?
\end{que}

The following corollary gives an answer to Question 2.6
from~\cite{K2016}.

\begin{cor}\label{c3.1}
Let $(G, X, \alpha)$ be a $G$-space with a $d$-open action, where
$X$ is a separable metrizable space. Then there exist a countable
metrizable group $H$,  a metrizable compactification $bX$ of $X$ and
a continuous action $\gamma: H\times bX\to bX$   such that the
restriction $\gamma_{H\times X} : H\times X\to X$ is a  $d$-open
action. Moreover, if we fix an arbitrary countable number of
homeomorphisms $S$ from $G$, then $H$ can be constructed in such a
way that $S\subset H$.
\end{cor}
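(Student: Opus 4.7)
The plan is to apply Theorem~\ref{t3.2} directly to $(G, X, \alpha)$, exploiting the fact that $w(X)=\aleph_0$ since $X$ is separable metrizable. Theorem~\ref{t3.2} then yields a group $H$ with $|H|\leq\aleph_0$ and $w(H)\leq\aleph_0$, together with an equivariant compactification $(H, bX, \tilde\gamma)$ of $(H, X, \gamma)$ satisfying $w(bX)=w(X)=\aleph_0$, where $\gamma$ is a $d$-open action on $X$ and the components of $\alpha$ and $\gamma$ coincide.

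It remains to upgrade these cardinality and weight bounds to the qualitative properties stated in the corollary. Since $bX$ is a compactum with countable weight, $bX$ is metrizable by the Urysohn metrization theorem. The topological group $H$ has countable weight, hence is first countable, and therefore metrizable by the Birkhoff--Kakutani theorem; combined with $|H|\leq\aleph_0$ this gives that $H$ is countable and metrizable, as required. The assertion that $\gamma|_{H\times X}$ is $d$-open is part of the conclusion of Theorem~\ref{t3.2}.

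For the \emph{moreover} clause, let $S$ be a fixed countable family of homeomorphisms of $X$ arising from elements of $G$, and let $S'\subset G$ denote the subgroup generated by $S$, so that $|S'|\leq\aleph_0=w(X)$. By Remark~\ref{r3.2}, the subgroup $H'\subset G$ appearing in the proof of Theorem~\ref{t3.1} (whose quotient by the kernel of the action on $X$ produces $H$) may be chosen to contain $S'$. Since the quotient collapses only elements that act trivially on $X$, under the canonical identification of $H$ with its image in $\Hom(X)$ we obtain $S\subset H$. No step in this argument presents a genuine obstacle: the corollary is essentially the specialization of Theorem~\ref{t3.2} to the countable-weight setting, together with Remark~\ref{r3.2} to accommodate the prescribed countable family $S$.
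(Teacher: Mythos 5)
Your proposal is correct and matches the paper's intended argument: the corollary is stated as an immediate specialization of Theorem~\ref{t3.2} (with $w(X)=\aleph_0$, so $bX$ and $H$ become metrizable and $H$ countable), with Remark~\ref{r3.2} supplying the \emph{moreover} clause, exactly as you do. The paper leaves this proof implicit (offering only an alternative route via Megrelishvili's embedding theorem in Remark~\ref{r3.3}), so no further comparison is needed.
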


\begin{cor}\label{c3.3}
Let $(G, X, \alpha)$ be a $G$-space with a $d$-open action, where
$X$ is a separable metrizable space. Then there exist a  Polish
$G$-space $(T, Y, \delta)$ with an open action, a  countable
subgroup $H'$ of $G$   such that  $(H', X, \alpha|_{H'\times X})$ is
equivariantly embedded in $(T, Y, \delta)$ and the image of $H'$ is
dense in $T$.

\end{cor}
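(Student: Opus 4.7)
The plan is to reduce to a countable metrizable group via Corollary~\ref{c3.1}, pass to its Raikov completion to obtain a Polish group $T$, extend the action to a metrizable compactification of $X$, and take $Y$ to be the locus of $d$-openness of the extended action.

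By Corollary~\ref{c3.1} together with Remark~\ref{r3.2} (and the construction in the proof of Theorem~\ref{t3.1}), I produce a countable subgroup $H'\subset G$, a subgroup $T_0\subset H'$ contained in the kernel of $\alpha$, a countable metrizable group $H=H'/T_0$ with continuous quotient homomorphism $q\colon H'\to H$ (where $H'$ carries the topology inherited from $G$), a metrizable compactification $bX$ of $X$, and a continuous action $\gamma\colon H\times bX\to bX$ whose restriction $\gamma|_{H\times X}$ is $d$-open. Set $T=\rho H$; since $H$ is countable metrizable, $T$ is Polish. Because the unique uniformity on $bX$ is a saturated equiuniformity with respect to which $\gamma$ is bounded, by the Megrelishvili extension theorem recalled in Section~\ref{Prelim}, $\gamma$ extends to a continuous action $\delta\colon T\times bX\to bX$.

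Let $Y\subset bX$ denote the set of points at which $\delta$ is $d$-open. By Lemma~\ref{lA.5} applied to the dense inclusion $H\subset T$, $Y$ coincides with the $d$-openness locus of $\gamma$; in particular $X\subset Y$ and $Y$ is $T$-invariant. By~\cite[Theorem 3]{K4}, since $T$ is \v Cech complete, the restriction $\delta|_{T\times Y}\colon T\times Y\to Y$ is open. The principal technical obstacle is to show that $Y$ is Polish: fix a countable base $(O_n)\subset N_T(e)$ with $O_{n+1}^2\subset O_n$ and a countable base $(V_k)$ of the Polish space $bX$. The elementary observation
$$\{y\in bX \mid V_k\subset\cl(O_n y)\} \;=\; \bigcap_{V_\ell\subset V_k} O_n^{-1}V_\ell$$
exhibits each of these sets as $G_\delta$, and a standard descriptive argument for continuous actions of Polish groups on Polish spaces then allows one to conclude that $Y=\bigcap_n\{y\mid y\in\int(\cl(O_n y))\}$ is $G_\delta$ in $bX$, hence Polish.

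Finally, define $\varphi=\iota\circ q\colon H'\to T$, where $\iota\colon H\hookrightarrow\rho H=T$ is the canonical embedding, and let $j\colon X\hookrightarrow Y$ be the inclusion. The image $\varphi(H')=\iota(H)$ is dense in $T$ by the definition of the Raikov completion. The pair $(\varphi,j)$ is equivariant: for $h\in H'$ and $x\in X$,
$$\delta(\varphi(h),j(x)) \;=\; \gamma(q(h),x) \;=\; \alpha(h,x),$$
using that $T_0=\ker q$ lies in the kernel of $\alpha$, so the $H'$-action on $X$ factors through $q$. This exhibits $(H',X,\alpha|_{H'\times X})$ as equivariantly embedded in the Polish $G$-space $(T,Y,\delta)$ with open action and dense image of $H'$ in $T$.
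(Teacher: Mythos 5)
Your overall route is the same as the paper's: extract the countable subgroup $H'$ and the countable metrizable quotient $H$ from the construction of Theorem~\ref{t3.1}, compactify equivariantly to $bX$, pass to the Raikov completion $T=\rho H$ and extend the action via~\cite{Mg1}, take $Y$ to be the $d$-openness locus, identify it for $H$ and for $T$ by Lemma~\ref{lA.5}, and invoke~\cite[Theorem 3]{K4} to get openness of the $T$-action on $Y$. Two remarks on the details. First, a minor one: the inclusion $X\subset Y$ does not follow merely from the $d$-openness of $\gamma|_{H\times X}$ as an action on $X$, since the interior and closure defining $d$-openness at a point of $X$ are taken in $bX$, not in $X$; this step needs~\cite[Proposition 3]{K4}, which the paper cites explicitly.

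The genuine gap is your argument that $Y$ is Polish. Your displayed identity is correct and shows that each set $\{y\in bX\mid V_k\subset\cl(O_ny)\}$ is a countable intersection of open sets, hence $G_\delta$. But the set $A_n=\{y\mid y\in\int(\cl(O_ny))\}$ is then
$\bigcup_k\bigl(V_k\cap\{y\mid V_k\subset\cl(O_ny)\}\bigr)$, a \emph{countable union} of $G_\delta$ sets, which is only $\Sigma^0_3$; intersecting over $n$ leaves you at $\Pi^0_4$, not $G_\delta$. There is no ``standard descriptive argument'' that closes this; as written the claim that $Y$ is $G_\delta$ in $bX$ is unproved. The paper avoids this entirely: once~\cite[Theorem 3]{K4} gives that the $T$-action on $Y$ is open (a step you already have), $Y$ splits into at most countably many clopen components (it is second countable), each component is a single orbit and hence homeomorphic to a coset space $T/T_y$ of the Polish group $T$ by a closed stabilizer, which is Polish; a countable topological sum of Polish spaces is Polish. (That $Y$ is then $G_\delta$ in $bX$ follows a posteriori from Alexandrov's theorem, but it is a consequence of Polishness here, not the route to it.) Replace your descriptive computation with this orbit decomposition and the proof is complete.
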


\begin{proof}
From the proof of Theorem~\ref{t3.1} it follows that there exists a
countable subgroup $H'$ of $G$ such that $(q,\id):(H', X,
\alpha|_{H'\times X})\rightarrow (H, X, \gamma)$ is an equivariant
embedding in a separable metrizable  topological transformation
group $(H, X, \gamma)$ with a $d$-open action. From
Corollary~\ref{c2.9} it follows that $(H, X, \gamma)$ has an
equivariant compactification $(H, bX, \tilde\gamma)$,  where $bX$ is
metrizable. Let $Y\subset bX$ be the set of points at which the
action $\tilde\gamma$ is $d$-open. It is an invariant subset of $bX$
and $X\subset Y$ by~\cite[Proposition 3]{K4}.

By~\cite{Mg1} $(H, bX, \tilde\gamma_\rho)$ is equivariantly embedded
in $(\rho H, bX, \tilde\gamma)$ and the action of $\rho H$ on $bX$
is $d$-open at the same invariant set of points $Y$ by
Lemma~\ref{lA.5}. Put $T=\rho H$, $T$ is Polish. In~\cite{An} it is
proved that the action of $T$ on $Y$ is open and, hence, $Y$ is
Polish.
\end{proof}

\begin{rem}\label{r3.3}
{\rm In order to prove Corollaries~\ref{c3.1} and~\ref{c3.3} we can
use M.~Megrelishvili's Theorem 2.5 from~\cite{Mg4}. We can
equivariantly embed a separable metrizable $G$-space in $(I^\omega,
\Hom (I^\omega), \pi)$ and then take the closures of the embedded
group in $\Hom (I^\omega)$ with the compact open topology and the
embedded space in $I^\omega$.}
\end{rem}

\begin{cor}\label{c3.4}
Every separable metrizable space $X$ which is a coset space of some
group has a Polish extension $\tilde X$ which is a coset space of a
Polish group. Moreover, if we fix any countable number of
homeomorphisms $S$ of $X$ then $\tilde X$ can be constructed in such
a way that $S$ are extended to $\tilde X$.
\end{cor}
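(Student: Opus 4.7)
My plan is to deduce the main statement from Corollary~\ref{c3.3} and to obtain the moreover part by enlarging the acting group so that $S$ becomes a subset of it.

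For the main statement, let $X$ be a coset space of a topological group $G$. The natural action $\alpha\colon G\times X\to X$ is continuous, transitive and open, so in particular $d$-open with $X$ as its unique component. I apply Corollary~\ref{c3.3} to $(G,X,\alpha)$ to obtain a Polish group $T$, a Polish $T$-space $(T,Y,\delta)$ with open action, a countable subgroup $H'\subset G$ whose image is dense in $T$, and an equivariant embedding of the $H'$-action on $X$ into $(T,Y,\delta)$. Fix $x_{0}\in X$ and set $\tilde X:=T\cdot x_{0}$. Since the $T$-action on $Y$ is open, $\tilde X$ is an open (hence clopen) $T$-orbit, so $\tilde X$ is Polish and, as a single orbit of a transitive open action, a coset space of $T$. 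To see $X\subset\tilde X$, I arrange the construction of Theorem~\ref{t3.1} inside Corollary~\ref{c3.3} so that the whole group $G$ lies in the filter basis $\mathcal O$ of Proposition~\ref{p3.1}; then by Remark~\ref{remadd1} the components of the $G$-action and of the resulting countable $H$-action on $X$ coincide. Since $\alpha$ is transitive, $X$ is the only component, so every $H$-orbit on $X$ is dense in $X$; in particular $H'\cdot x_{0}$ is a dense subset of the clopen subset $\tilde X\cap X$ of $X$, forcing $\tilde X\cap X=X$.

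For the moreover part, fix a countable $S\subset\Hom(X)$. The idea is to enlarge $G$ to a topological group $G^{\ast}$ that contains $S$. Take $G^{\ast}$ to be the abstract subgroup of $\Hom(X)$ generated by $G$ and $S$, topologized so that the inclusion $G\hookrightarrow G^{\ast}$ is a topological embedding and the natural action $\alpha^{\ast}\colon G^{\ast}\times X\to X$ is continuous (for example via the topological free product of $G$ and the discrete free group on $S$). Every $U\in N_{G^{\ast}}(e)$ then contains the $G$-neighborhood $U\cap G$ of $e$, so the openness of $\alpha$ transfers to $\alpha^{\ast}$, which remains transitive because $G$ is. Applying the moreover of Corollary~\ref{c3.1} to $(G^{\ast},X,\alpha^{\ast})$ with $S\subset G^{\ast}$ produces a countable metrizable group $H^{\ast}\supset S$ acting $d$-openly on $X$; continuing as in Corollary~\ref{c3.3} yields a Polish group $T^{\ast}=\rho H^{\ast}\supset S$ acting openly on a Polish $Y^{\ast}\supset X$. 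Setting $\tilde X:=T^{\ast}\cdot x_{0}$ and repeating the density argument of the main part (with $G^{\ast}$ in place of $G$) gives $X\subset\tilde X$, and since $S\subset T^{\ast}$ every $s\in S$ acts on $\tilde X$ as a homeomorphism whose restriction to $X$ is the original $s$.

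The delicate point is the topology on $G^{\ast}$: since the elements of $S$ need not normalize $G$ inside $\Hom(X)$, one cannot simply declare $G$ to be an open subgroup of $G^{\ast}$ and obtain a group topology, so turning $G^{\ast}$ into a topological group whose action on $X$ is continuous demands a careful free-product-type construction. A clean alternative, following Remark~\ref{r3.3}, is to embed $(G,X,\alpha)$ equivariantly into $(\Hom(I^{\omega}),I^{\omega},\pi)$, extend each $s\in S$ to a self-homeomorphism of $I^{\omega}$, and then form $G^{\ast}$ as the subgroup of $\Hom(I^{\omega})$ generated by the image of $G$ together with these extensions, equipped with the compact-open topology.
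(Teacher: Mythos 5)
Your proof of the main assertion is correct and follows the route the paper intends: apply Corollary~\ref{c3.3} to the open transitive action, take $\tilde X=Tx_0$, and use Remark~\ref{remadd1} (putting $G$ itself into the filter basis $\mathcal O$) to see that the countable group $H'$ has dense orbits in $X$, so that the clopen orbit $Tx_0$ must absorb all of $X$. That part I would accept as written (you might add the one-line observation that $X$ is dense in $\tilde X$, which follows since $X$ is dense in $bX\supset Y\supset\tilde X$).

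The ``moreover'' part, however, has a genuine gap, and you have in effect flagged it yourself without closing it. The whole machinery of Lemma~\ref{l3.2} and Theorem~\ref{t3.1} conjugates neighborhoods $U\in N_G(e)$ only by elements of subgroups \emph{of} $G$, so every set in the resulting filter basis is still a neighborhood of $e$ in $G$; that is precisely why continuity, openness and $d$-openness survive. Once you adjoin $s\in S\setminus G$, the sets $sUs^{-1}$ are no longer neighborhoods of $e$ in any topology on $G$, and finite intersections such as $U\cap sU's^{-1}$ can be as small as $\{e\}$, so the induced topology on $G$ inside $G^{\ast}$ becomes strictly finer and the openness of the action is destroyed --- your sentence ``every $U\in N_{G^{\ast}}(e)$ contains the $G$-neighborhood $U\cap G$ of $e$'' presupposes exactly what has to be proved, namely that $G$ embeds topologically in $G^{\ast}$. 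The free-product suggestion does not repair this: neighborhoods of the identity in the Graev free product contain alternating products of unbounded length, so the action of $G\ast F_d(S)$ on a non-compact $X$ need not be continuous at $(e,x)$. The ``clean alternative'' is also not a proof as stated: a homeomorphism of a subspace $X\subset I^{\omega}$ need not extend to a self-homeomorphism of $I^{\omega}$ (if $X$ is embedded densely and $s$ is not uniformly continuous for the metric of $I^{\omega}$, then $s$ does not even extend continuously to $\cl X$). To make that route work you would need to arrange an \emph{equivariant} embedding with $Z$-set image and invoke the homeomorphism extension theorem for $Z$-sets of the Hilbert cube, and then check that the group generated in $\Hom(I^{\omega})$ by the image of $G$ and the extensions still acts openly on $X$ in the compact-open topology; none of this is in your argument. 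Note also that Corollary~\ref{c3.1} only guarantees $S\subset H$ for $S$ taken \emph{from} $G$, so it cannot be applied to $(G^{\ast},X,\alpha^{\ast})$ until $G^{\ast}$ has actually been produced as a topological group acting continuously and $d$-openly. As it stands, the ``moreover'' clause is asserted but not proved.
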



\end{document}